\numberwithin{equation}{section}
\theoremstyle{plain}
\newtheorem*{rep@theorem}{\rep@title}
\newcommand{\newreptheorem}[2]{%
\newenvironment{rep#1}[1]{%
 \def\rep@title{#2 \ref{##1}}%
 \begin{rep@theorem}}%
 {\end{rep@theorem}}}
\newtheorem{theorem}[equation]{Theorem}
\newtheorem{proposition}[equation]{Proposition}
\newtheorem{lemma}[equation]{Lemma}
\newtheorem{corollary}[equation]{Corollary}
\newtheorem{claim}[equation]{Claim}
\theoremstyle{remark}
\newtheorem{remark}[equation]{Remark}
\theoremstyle{definition}
\newtheorem{definition}[equation]{Definition}
\newtheorem*{question*}{Question}
\newcommand{\K}{{\mathcal K}}
\newcommand{\R}{\mathbf R}
\newcommand{\diam}{\operatorname{Diam}}
\newcommand{\al}{\alpha}
\newcommand{\D}{\partial}
\newcommand{\Lap}{\Delta}
\newcommand{\eps}{\varepsilon}
\newcommand{\ra}{\rightarrow}
\providecommand{\abs}[1]{\lvert #1\rvert}
\def\XXint#1#2#3{{\setbox0=\hbox{$#1{#2#3}{\int}$}
     \vcenter{\hbox{$#2#3$}}\kern-.5\wd0}}
\begin{document}

\title[Star-shaped mean curvature flow]{Mean curvature flow of star-shaped hypersurfaces}

\author{Longzhi Lin}
\address[L. ~Lin]{Mathematics Department\\University of California - Santa Cruz\\1156 High Street, Santa Cruz, CA 95064\\USA}
\email{lzlin@ucsc.edu}

\thanks{The author was partially supported by a Faculty Research Grant awarded by the Committee on Research from UC, Santa Cruz.}

\subjclass[2010]{53C44, 35K55.}

\date{}
\maketitle


\begin{abstract}
In 1998  Smoczyk \cite{Smoczyk_star_shaped} showed that, among others, the blowup limits at singularities are convex for the mean curvature flow starting from a closed star-shaped surface in $\mathbf{R}^3$. We prove in this paper that this is true for the mean curvature flow of star-shaped hypersurfaces in $\mathbf{R}^{n+1}$ in arbitrary dimension $n\geq 2$. In fact, this holds for a much more general class of initial hypersurfaces. In particular, this implies that the mean curvature flow of star-shaped hypersurfaces is generic in the sense of Colding-Minicozzi \cite{CM_generic}.
\end{abstract}

\section{Introduction}
A family of hypersurfaces evolves by mean curvature flow if the velocity at each point is given by the mean curvature vector. Mean curvature flow has been extensively studied ever since the pioneering work of Brakke \cite{brakke} and Huisken \cite{Huisken_convex}.
While the theory was progressing in many fruitful directions, 
there was one persistent central theme:  
the investigation of singularities, and the development of
 related techniques.  In the last 15 years, this
culminated in the spectacular
work of White \cite{white_size,white_nature,white_subsequent} and Huisken-Sinestrari 
\cite{huisken-sinestrari1,huisken-sinestrari2,huisken-sinestrari3}
on mean curvature flow in the case of mean convex 
hypersurfaces, i.e. hypersurfaces with positive mean curvature.
Their papers give a far-reaching structure theory, providing a package of estimates
that yield a qualitative picture of singularities and a global description of the
large curvature part in a mean convex flow.

In a recent paper \cite{haslhofer-kleiner_mean_convex} (see also \cite{haslhofer-kleiner_surgery}), Haslhofer-Kleiner gave a new treatment of the theory of White and Huisken-Sinestrari. A key ingredient in this new approach is a new preserved quantity under mean convex mean curvature flow discovered by Andrews \cite{andrews1} (see also \cite{white_size,sheng_wang}),
called $\alpha$-noncollapsing.
A mean convex hypersurface $M\subset\R^{n+1}$ is \emph{$\alpha$-noncollapsed},
 if each point $p\in M$ admits interior and exterior ball tangent at $p$ of radius at least $\alpha/H(p)$.
The definition and preservation of $\alpha$-noncollapsing crucially depends on the fact that the the mean curvature is positive ($H>0$).

If the initial hypersurface $M_0$ is not mean convex, then the theory of mean convex mean curvature flow is not applicable. It is thus a very interesting question, whether the results can nevertheless be extended to some situations where the mean curvature changes sign.
As observed by Smoczyk \cite{Smoczyk_star_shaped}, a good situation to look for such extensions is the setting where the initial hypersurface is \emph{star-shaped}, i.e. where $M_0$ satisfies $\langle X,\boldsymbol{\nu}\rangle>0$, with $\boldsymbol{\nu}$ denoting the outward unit normal.
In this setting, the relevant quantity to consider is $F=\langle X,\boldsymbol{\nu}\rangle +2tH$, which is nondecreasing and positive along the flow.
Smoczyk proved that the Huisken-Sinestrari convexity estimate holds for the flow of star-shaped surfaces in $\mathbb{R}^3$ \cite[Thm. 1.1]{Smoczyk_star_shaped},
and it was pointed out by Huisken-Sinestrari \cite[Rem. 3.8]{huisken-sinestrari2} that (by taking care of some lower order terms) their proof of the convexity estimate in fact goes through for star-shaped hypersurfaces in arbitrary dimension.
Moreover, it has been observed by Andrews \cite{andrews1} that a variant of his $\alpha$-noncollapsing condition, where $H$ is replaced by $F$, is preserved for mean curvature flow with star-shaped initial condition.

In this paper, we will use the framework of Haslhofer-Kleiner \cite{haslhofer-kleiner_mean_convex} for mean convex mean curvature flow to prove estimates and structural results for the star-shaped case. In fact, our results are true for a much more general class of initial hypersurfaces, see Remark \ref{general_class}.  In Section \ref{sec_prelim}, we collect some preliminaries on (star-shaped) mean curvature flow and recall the variant of Andrews' noncollapsing result for the star-shaped case (Theorem \ref{thm_noncollapsing}).
In Section \ref{sec_mainest}, we prove three main estimates for the mean curvature flow with star-shaped initial condition.
The local curvature estimate (Theorem \ref{thm-local_curvature_est}) gives curvature control in a parabolic neighborhood of definite size assuming only curvature control at a single point.
The convexity estimate (Theorem \ref{thm_convexity_est}) gives pinching of the principal curvatures towards positive.
The blowup theorem (Theorem \ref{thm_blow_up_I}) allows us to pass to blowup limits smoothly and globally.
In Section \ref{sec_weaksol}, we explain that our three main estimates still hold beyond the first singular time
if the mean curvature is interpreted in the viscosity sense (Definition \ref{def_viscosity}).
As a consequence, we obtain a structure theorem (Theorem \ref{thm_tangent_flow}),
which says that all tangent flows in the star-shaped case are either planes or shrinking round spheres or cylinders,
and a partial regularity theorem (Theorem \ref{thm_partial_regularity}),
which says that the parabolic Hausdorff (and Minkowski) dimension of the singular set $\mathcal{S}\subset\R^{n+1,1}$ in the star-shaped case is at most $n-1$.
In particular, we see that the star-shaped case provides a setting where all mean curvature flow singularities are generic in the sense of \cite{CM_generic}.
Thus, applying recent results of Colding-Minicozzi \cite{CM_unique,CM_singular_set} we can also conclude that all tangent flows are unique and that the $(n-1)$-dimensional parabolic Hausdorff measure of $\mathcal{S}$ is in fact finite.

The proofs of the local curvature estimate, the convexity estimate and elliptic regularization for star-shaped flows
are quite different from \cite{haslhofer-kleiner_mean_convex} and require a number of new ideas.
E.g. we have to relate bounds for $F$ (which appears in the definition of noncollapsing)
and bounds for $H$ (which we get by comparison with spheres),
and we have to overcome the difficulty that for star-shaped flow the speed $H$ doesn't have a sign, i.e. that the motion in general doesn't produce a foliation.

\noindent{\textit{Acknowledgement.}} The author would like to thank Robert Haslhofer for the continued stimulating and useful discussions.

\section{Preliminaries}\label{sec_prelim}
\subsection{Notation and terminology}
A smooth family $\{M_t \subset \R^{n+1}\}_{t\in I}$ of closed embedded hypersurfaces, where $I\subset \R$ is an interval, \textit{moves by mean curvature flow} if $M_t = X_t(M) = X(M, t)$ for some smooth family of embeddings $\{X_t: M \to \R^{n+1}\}_{t\in I}$ satisfying the mean curvature flow equation
$$
\frac{\partial X_t}{\partial t} = - H \boldsymbol{\nu},
$$ 
where $H$ denotes the mean curvature and $\boldsymbol{\nu}$ is the outward unit normal at $X_t$. Instead of the family $\{M_t\}$ itself, we will think in terms of the evolving family $\{K_t\}$ of the compact domains bounded by the $M_t$'s. 

\textit{Space-time} $\R^{n+1, 1}$ is defined to be $\R^{n+1} \times \R$ equipped with the parabolic metric $d((x_1,t_1),(x_2,t_2)) = \max(|x_1 - x_2|, |t_1 -t_2|^{\frac{1}{2}})$. \textit{Parabolic rescaling by $\lambda \in (0,\infty)$ at $(x_0,t_0) \in \R^{n+1, 1}$} is described by the mapping
$$
(x,t) \mapsto (\lambda (x-x_0), \lambda^2(t-t_0)).
$$
The \textit{parabolic ball} with radius $r>0$ and center $X=(x,t) \in \R^{n+1,1}$ is the product
$$
P(x,t,r) = B(x,r) \times (t-r^2, t] \subset \R^{n+1,1}\,.
$$
When we talk about a flow in a parabolic ball $P(x,t,r)$ we in particular include the assumption that the flow existed at least since $t-r^2$.

Given a family of subsets $\{K_t \subseteq \R^{n+1}\}_{t\in I}$ its \textit{space-time track} is the set
$$
\mathcal{K} = \cup_{t\in I} K_t \times \{t\} \subseteq \R^{n+1, 1}\,.
$$
Given a subset $\mathcal{K} \subseteq \R^{n+1,1}$, the \textit{time $t$ slice} of $\mathcal{K}$ is
$$
K_t = \{x\in \R^{n+1} \,|\, (x,t) \in \mathcal{K}\}\,.
$$ 

Given a smooth compact domain $K_0\subset \R^{n+1}$ we write $K_t$ for the evolution of $K_0$ by mean curvature flow. In technical terms, this is the \textit{level set flow} $\{K_t \subset \R^{n+1}\}$ starting at $K_0$, see \cite{evans-spruck}, \cite{CGG} and \cite{Ilmanen}. The level set flow can be defined as the maximal family of closed sets $\{K_t\}_{t\geq 0}$ starting at $K_0$ that satisfies the the avoidance principle
$$
K_{t_0} \cap L_{t_0} = \emptyset \Rightarrow K_{t} \cap L_{t} = \emptyset \quad \text{for all }\, t\in [t_0,t_1], 
$$
whenever $\{L_t\}_{t\in [t_0,t_1]}$ is a smooth compact mean curvature flow.
The definition is phrased in such a way, that existence and uniqueness are immediate. Moreover, the level set flow of $K_0$ coincides with smooth
mean curvature flow of $K_0$ for as long as the latter is defined.

We suppress the dependence on $n$ in the notation, and we always assume that the initial domain $K_0\subset \R^{n+1}$ is smooth and compact.


\subsection{Star-shapedness and $\alpha$-noncollapsing}
A smooth compact domain $K_0\subset \R^{n+1}$ is called \emph{star-shaped} (around the origin) if $\langle X,\boldsymbol{\nu} \rangle >0$ for all $X\in\partial K_0$.

\begin{proposition}[{\cite[Prop. 4]{Smoczyk_star_shaped}}]
The quantity $F=\langle X,\boldsymbol{\nu}\rangle +2tH$ satisfies the evolution equation
\begin{equation}\label{eqn_evo_F}
\partial_t F=\Lap F+\abs{A}^2F,
\end{equation}
In particular, if $K_0$ is star-shaped, then $F$ is positive for all $t\geq 0$ as long as the flow exists.
\end{proposition}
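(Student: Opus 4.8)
The plan is to obtain \eqref{eqn_evo_F} from the standard evolution equations of mean curvature flow and then read off positivity from the parabolic maximum principle. I would work with the support function $u=\langle X,\boldsymbol{\nu}\rangle$, so that $F=u+2tH$, and use that under the flow $\partial_t X=-H\boldsymbol{\nu}$, differentiated at a fixed point of $M$, one has Huisken's equation $\partial_t H=\Lap H+\abs{A}^2 H$ together with $\partial_t\boldsymbol{\nu}=\nabla H$, where $\nabla$ denotes the intrinsic gradient on $M_t$ and the latter identity follows from $\partial_t\boldsymbol{\nu}=-\nabla(-H)$. Using that $\nabla H$ is tangential, these two facts give at once
\[
\partial_t u=\langle\partial_t X,\boldsymbol{\nu}\rangle+\langle X,\partial_t\boldsymbol{\nu}\rangle=-H+\langle X^{\top},\nabla H\rangle,
\]
where $X^{\top}$ is the tangential part of the position vector.

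The only computation of substance is $\Lap u$. Differentiating $u$ and using the Weingarten relation $\nabla_i\boldsymbol{\nu}=h_i^{\,k}\partial_k X$ gives $\nabla_i u=h_{ik}(X^{\top})^k$; differentiating again, expressing $\nabla_i X^{\top}$ in terms of $A$ via the Gauss formula (so that $\nabla_i(X^{\top})^j=\delta_i^j-u\,h_i^{\,j}$), and tracing, I get
\[
\Lap u=g^{ij}(\nabla_i h_{jk})(X^{\top})^k+H-\abs{A}^2 u.
\]
By the Codazzi equation, $\nabla A$ is totally symmetric for hypersurfaces in $\R^{n+1}$, so $g^{ij}\nabla_i h_{jk}=\nabla_k H$ and the first term equals $\langle X^{\top},\nabla H\rangle$. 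Subtracting this from the formula for $\partial_t u$ cancels the gradient terms and leaves $\partial_t u=\Lap u-2H+\abs{A}^2 u$. Since $\partial_t F=\partial_t u+2H+2t\,\partial_t H$, plugging in this identity and Huisken's equation for $H$, and cancelling $-2H$ against $+2H$, gives
\[
\partial_t F=\Lap u+2t\,\Lap H+\abs{A}^2(u+2tH)=\Lap F+\abs{A}^2 F,
\]
which is precisely \eqref{eqn_evo_F}. I expect the bookkeeping in the identity for $\Lap\langle X,\boldsymbol{\nu}\rangle$ --- keeping the signs in the Gauss--Weingarten equations straight and correctly invoking Codazzi --- to be the only delicate point.

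For the final assertion, note that at $t=0$ we have $F=\langle X,\boldsymbol{\nu}\rangle>0$ on the compact hypersurface $\partial K_0$, so $\min_{\partial K_0}F>0$. Equation \eqref{eqn_evo_F} is a linear parabolic equation with nonnegative zeroth-order coefficient $\abs{A}^2$, so the maximum principle applies along the compact evolving hypersurfaces: at a point where $F(\cdot,t)$ attains its spatial minimum one has $\Lap F\ge 0$, hence $\tfrac{d}{dt}\min_{M_t}F\ge\abs{A}^2\min_{M_t}F\ge 0$ as long as $\min_{M_t}F\ge 0$, and therefore $\min_{M_t}F\ge\min_{\partial K_0}F>0$ for every $t$ in the interval of existence. (Alternatively one may quote the strong maximum principle directly.)
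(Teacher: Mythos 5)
The paper itself does not provide a proof of this proposition; it simply cites \cite[Prop.~4]{Smoczyk_star_shaped}. Your derivation is a correct, self-contained proof along the standard lines: the evolution $\partial_t \boldsymbol{\nu} = \nabla H$, the Gauss--Weingarten relations giving $\nabla_i u = h_{ik}(X^{\top})^k$ and $\nabla_i (X^{\top})^j = \delta_i^j - u h_i^{\,j}$, the trace and Codazzi identity yielding $\Lap u = \langle X^{\top},\nabla H\rangle + H - \abs{A}^2 u$, and hence $\partial_t u = \Lap u - 2H + \abs{A}^2 u$; combining with Huisken's $\partial_t H = \Lap H + \abs{A}^2 H$ the term $-2H$ cancels against the $+2H$ from differentiating $2tH$, giving \eqref{eqn_evo_F}. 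The maximum-principle step for positivity is also fine: at $t=0$, $F = \langle X,\boldsymbol{\nu}\rangle > 0$ on the compact initial hypersurface, and the zeroth-order coefficient $\abs{A}^2 \ge 0$ preserves positivity. This is, as far as I can tell, the same computation as in Smoczyk's cited proposition, so there is nothing to flag beyond noting that the paper defers the proof to the reference.
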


\begin{proposition}[{c.f. \cite[Lem. 1.1]{Smoczyk_star_shaped}}]\label{cor_H_lower_bound}
If $K_t$ is a mean curvature flow starting at a star-shaped domain $K_0\subset\R^{n+1}$,
then $H\geq -C$, where $C$ only depends on $\beta = \max\{\max_{\partial K_0} |A|, \diam(\partial K_0)\}$.
\end{proposition}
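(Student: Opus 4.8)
The plan is to split the time interval into a short initial piece, on which the flow stays smooth with an a priori bound on $|A|$ depending only on its initial value, and a later piece, on which the positivity of $F$ supplied by the preceding proposition already forces $H$ to be almost nonnegative.

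As a first step I would record a uniform spatial containment. Since $K_0$ is star-shaped around the origin, $0\in\Int K_0$, and hence $|X|\le\diam(\partial K_0)\le\beta$ for every $X\in\partial K_0$, i.e.\ $K_0\subseteq\overline{B_\beta(0)}$. Comparing with the shrinking spheres $\partial B_{r(t)}(0)$, $r(t)=\sqrt{\beta^2-2nt}$, via the avoidance principle gives $K_t\subseteq\overline{B_{r(t)}(0)}\subseteq\overline{B_\beta(0)}$ for every $t$ in the interval of existence, so that $\langle X,\boldsymbol{\nu}\rangle\le|X|\le\beta$ on $\partial K_t$ by Cauchy--Schwarz.

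Next I would invoke the standard short-time curvature estimate. From $\D_t|A|^2=\Lap|A|^2-2|\nabla A|^2+2|A|^4\le\Lap|A|^2+2|A|^4$ and the parabolic maximum principle, comparison with the ODE $f'=2f^2$, $f(0)=\beta^2$, yields $\max_{M_t}|A|^2\le\beta^2/(1-2\beta^2 t)\le 2\beta^2$ for $t\in[0,t_0]$ with $t_0:=\tfrac{1}{4\beta^2}$; in particular the flow exists smoothly at least up to time $t_0$, and there $|H|\le\sqrt{n}\,|A|\le\sqrt{2n}\,\beta$.

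Finally I would combine the two regimes. On $[0,t_0]$ the curvature bound already gives $H\ge-\sqrt{2n}\,\beta$. For $t\ge t_0$, the preceding proposition gives $F=\langle X,\boldsymbol{\nu}\rangle+2tH>0$, hence $H>-\tfrac{\langle X,\boldsymbol{\nu}\rangle}{2t}\ge-\tfrac{\beta}{2t}\ge-\tfrac{\beta}{2t_0}=-2\beta^3$. Thus $H\ge -C$ with $C=\max(\sqrt{2n}\,\beta,\,2\beta^3)$, which depends on $\beta$ only (the dimension $n$ being suppressed throughout). The one point that needs care is the behavior as $t\to 0^+$: the lower bound obtained from $F>0$ degenerates like $1/t$, so the short-time estimate near $t=0$ is genuinely needed --- but it consumes only the bound on $|A|$ at $t=0$, which is precisely the information carried by $\beta$, so the argument closes.
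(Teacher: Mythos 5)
Your proof is correct and follows essentially the same two-step strategy as the paper: a short-time curvature bound for $|A|$ via the evolution equation and the maximum principle (giving the bound on $[0,t_0]$), followed by invoking positivity of $F=\langle X,\boldsymbol{\nu}\rangle+2tH$ together with the spatial bound $\langle X,\boldsymbol{\nu}\rangle\le\diam(\partial K_0)$ for later times. You are merely more explicit than the paper about the containment $K_t\subseteq\overline{B_\beta(0)}$ (which the paper uses implicitly) and about the numerical constants.
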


\begin{proof} Using the evolution equation for $|A|^2$,
$$
\partial_t |A|^2 = \Delta |A|^2 - 2|\nabla A|^2 + 2 |A|^4,
$$
and the maximum principle, there exists some small $\sigma>0$ depending only on $\max_{\partial K_0} |A|$,
such that for all $t \in [0,\sigma]$ we have $\max_{\partial K_t} |A| \leq 2\max_{\partial K_0} |A|$\,.
This gives $\tilde{C}$ such that $H \geq -\tilde{C}$ for all $t \in [0,\sigma]$. For $t> \sigma$, since $F=\langle X,\boldsymbol{\nu}\rangle +2tH$ is positive we have $H\geq -\diam(\partial K_0)/\sigma$.
\end{proof}

\begin{remark}\label{general_class}
In fact, the quantity $F = a_1\langle X,\boldsymbol{\nu}\rangle +(a_2 + 2a_1 t)H$, where $a_1+a_2>0$, also satisfies equation \eqref{eqn_evo_F}. Therefore, with minor modifications (cf. \eqref{eqn_rescaling} - \eqref{eqn_rescaled_flow_mono}), our proofs generalize to the class of initial hypersurfaces that satisfy the condition $a_1\langle X,\boldsymbol{\nu}\rangle +a_2H >0$. Such class of initial hypersurfaces include mean convex hypersurfaces ($a_1 = 0, a_2 =1$) and star-shaped hypersurfaces ($a_1 = 1, a_2 = 0$) and more, cf. \cite{Smoczyk_star_shaped}.
\end{remark}

\begin{definition}[{c.f. \cite[Def. 1]{andrews1}}]\label{def_noncoll}
Let $\alpha>0$. A smooth compact domain $K\subset \R^{n+1}$ with $F>0$ is \emph{$\alpha$-noncollapsed} if each point $p\in \partial K$ admits interior and exterior balls tangent at $p$ of radius at least $\alpha/{F(p)}$.
\end{definition}

By compactness, each star-shaped domain $K_0\subset \R^{n+1}$ satisfies the $\alpha$-noncollapsing condition for some $\alpha=\alpha(K_0)>0$.
The following theorem shows that $\alpha$-noncollapsing is preserved along the mean curvature flow.

\begin{theorem}[c.f. {\cite[Rem. 7]{andrews1}, \cite[Rem. 3]{Andrews_Langford_McCoy}}]\label{thm_noncollapsing}
If $K_0$ is $\alpha$-noncollapsed, then $K_t$ is $\alpha$-noncollapsed for the same constant $\alpha$.
\end{theorem}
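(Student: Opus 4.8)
The plan is to follow Andrews' maximum-principle argument for noncollapsing, adapted to the quantity $F$ in place of $H$. The key point is that both the evolution equation for $F$, namely $\partial_t F = \Lap F + |A|^2 F$, and the reaction–diffusion structure of the noncollapsing quantity are formally identical to the mean convex case once $H$ is replaced by $F$; this is exactly why Andrews' proof carries over. Concretely, for $\alpha > 0$ define the function
\begin{equation*}
Z(p,q,t) = \frac{2}{\alpha}\,\frac{F(p,t)\,\langle X(p,t)-X(q,t),\,\boldsymbol{\nu}(p,t)\rangle}{|X(p,t)-X(q,t)|^2}
\end{equation*}
on $(\partial K_t \times \partial K_t)\setminus\{\text{diagonal}\}$, together with its limiting value on the diagonal, which equals $\tfrac{1}{\alpha}F(p,t)\,\langle A_{p,t}\,\boldsymbol{\nu}(p,t),\boldsymbol{\nu}(p,t)\rangle$ (the largest principal curvature direction dominates). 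A standard computation shows that the interior ball of radius $\alpha/F(p,t)$ tangent at $p$ lies inside $K_t$ if and only if $Z(p,q,t)\le 1$ for all $q\in\partial K_t$; likewise the exterior ball condition is the symmetric statement with the sign reversed, so it suffices to bound $\sup_{p,q}Z \le 1$ for all $t$, which holds at $t=0$ by hypothesis.

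The main step is to show that $\sup_{p,q} Z(\cdot,\cdot,t) \le 1$ is preserved. Suppose not; then at a first time $t_0$ and a pair $(p_0,q_0)$ the supremum equals $1$, and we may assume $p_0 \ne q_0$ (the diagonal case reduces, after a short computation, to the pointwise inequality $F\,\kappa_{\max} \le \alpha^{-1}$… wait, $\le \alpha$ after the normalization, being controlled by the off-diagonal values as $q\to p$). At such an interior spatial maximum one has $\nabla_p Z = \nabla_q Z = 0$, the spatial Hessian in the combined $(p,q)$ variables is nonpositive, and $\partial_t Z \ge 0$. The heart of the matter is the computation of $(\partial_t - \Lap_p - \Lap_q)Z$ at $(p_0,q_0,t_0)$: using the mean curvature flow equations for $X(p,\cdot)$ and $X(q,\cdot)$, the evolution equation \eqref{eqn_evo_F} for $F$, the Codazzi equation, and the first-derivative (critical point) relations to substitute for the gradient terms, one must show that this quantity is strictly negative — or more precisely, $\le 0$ with a strictly negative gradient-square remainder term that forces a contradiction. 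The $|A|^2 F$ term in \eqref{eqn_evo_F} is precisely what combines with the $|A|^2$-type terms arising from differentiating $\langle X(p)-X(q),\boldsymbol{\nu}(p)\rangle/|X(p)-X(q)|^2$ so as to produce a clean sign; this is the algebraic miracle that makes Andrews' argument work, and it is insensitive to whether $F$ or $H$ sits in the numerator, since only the evolution equation and not the geometric meaning of the coefficient is used.

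The one genuine subtlety — and the step I expect to require the most care — is the transition to the \textbf{viscosity / level-set setting}, i.e. the fact that we only assume $K_t$ is the level set flow of a smooth compact $K_0$, which may develop singularities. On the smooth portion of the flow the argument above is rigorous; to conclude for the level set flow one argues as in Andrews and in \cite{haslhofer-kleiner_mean_convex} that the two-point function $Z$ satisfies a \emph{viscosity} differential inequality, or alternatively uses the avoidance principle directly: $\alpha$-noncollapsing at time $t$ says that through each boundary point there pass an interior and an exterior sphere of a controlled radius, and one propagates these spheres as shrinking-sphere mean curvature flows and invokes the avoidance principle built into the definition of the level set flow, taking a limit of the radii. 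For the smooth case, however, the interior-ball characterization plus the maximum-principle computation sketched above is the whole proof, and I would present it in that form, remarking afterwards that it persists through singularities by the avoidance principle. A minor technical point to handle along the way is the behaviour of $Z$ as $q \to p$ (continuity up to the diagonal and identification of the diagonal value with the principal-curvature expression), which is handled by a Taylor expansion of $X(q,t)$ about $p$; and one should note $F > 0$ throughout, which is guaranteed by Proposition \ref{cor_H_lower_bound} and the preceding proposition, so the radii $\alpha/F$ are well-defined and finite.
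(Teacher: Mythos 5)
Your argument is correct and takes essentially the same route as the paper's: Andrews' two-point-function maximum-principle argument with $H$ replaced by $F$, which works because $F$ satisfies the same reaction--diffusion equation $\partial_t F = \Delta F + |A|^2 F$ (the paper organizes the computation by citing the Andrews--Langford--McCoy viscosity evolution inequalities $\partial_t Z_\ast \geq \Delta Z_\ast + |A|^2 Z_\ast$ and $\partial_t Z^\ast \leq \Delta Z^\ast + |A|^2 Z^\ast$, then applies the scalar maximum principle to the quotient $Z_\ast/F$, which matches your $F/\alpha$ normalization up to sign conventions). Your closing concern about the level-set/viscosity setting is premature for this statement, which is proved for the smooth flow; the extension to the level set flow is handled separately in Section~\ref{sec_weaksol} via elliptic regularization (Lemma~\ref{lem_elliptic_approximator}), not by the avoidance-principle limiting argument you sketch.
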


\begin{proof}
The proof follows from a similar computation as in \cite{andrews1} and \cite{Andrews_Langford_McCoy}. For the convenience of reader, we include it here. Consider
$$Z(x,y,t)= \frac{2\langle X(y,t) - X(x,t), \boldsymbol{\nu}(x,t) \rangle}{\| X(y,t) - X(x,t)\|^2}$$
and
$$Z_\ast(x,t)= \inf_{y\neq x} Z(x,y,t)\,,\quad Z^\ast (x,t)= \sup_{y\neq x} Z(x,y,t)\,.$$
By a simple geometric argument, interior and exterior $\alpha$-noncollapsing is equivalent to the inequalities $\frac{Z_\ast}{F} \geq -\frac{1}{\alpha}$ and $\frac{Z^\ast}{F} \leq \frac{1}{\alpha}$, respectively.

Computing various derivatives of $Z$, Andrews-Langford-McCoy derived the evolution inequalities (in the viscosity sense),
\begin{equation}\label{eqn_Z_ast}
\partial_t Z_\ast \geq \Delta Z_\ast + |A|^2 Z_\ast\,,\quad \partial_t Z^\ast \leq \Delta Z^\ast + |A|^2 Z^\ast\,,
\end{equation}
see \cite[Thm. 2]{Andrews_Langford_McCoy}.
 Combining this with \eqref{eqn_evo_F} we obtain
\begin{align}\label{eqn_ALM}
 (\partial_t - \Delta) \frac{Z_\ast}{F} &= \frac{(\partial_t - \Delta) Z_\ast}{F} - \frac{Z_\ast (\partial_t - \Delta) F}{F^2} + 2 \left\langle \nabla \log F, \nabla \frac{Z_\ast}{F}\right\rangle\nonumber\\
 &\geq 2 \left\langle \nabla \log F, \nabla \frac{Z_\ast}{F}\right\rangle\,.
 \end{align}
By the maximum principle, the minimum of $\frac{Z_\ast}{F}$ is nondecreasing in time. In particular, if the inequality $\frac{Z_\ast}{F} \geq -\frac{1}{\alpha}$ holds at $t = 0$, then this inequality holds for all $t$. Arguing similarly we obtain that
 \begin{equation}
  \partial_t \frac{Z^\ast}{F} \leq \Delta \frac{Z^\ast}{F} + 2 \left\langle \nabla \log F, \nabla \frac{Z^\ast}{F}\right\rangle\,,
 \end{equation}
and thus that the inequality $\frac{Z^\ast}{F} \leq \frac{1}{\alpha}$ is also preserved along the flow.
\end{proof}

\begin{remark}[parabolic rescaling]\label{remark_rescaling}
If $\{K_t\}_{t\in I}$ is an $\alpha$-noncollapsed flow and if $\{\hat{K}_t\}_{t\in \hat{I}}$ denotes the flow obtained by the parabolic rescaling $(x,t) \to (\lambda x, \lambda^2 t)$, $\lambda \in (0,\infty)$, then $\{\hat{K}_t\}_{t\in \hat{I}}$ is $(\lambda^2\alpha)$-noncollapsed.
\end{remark}

\section{Main estimates and consequences}\label{sec_mainest}

Throughout this section, we consider mean curvature flows $\{K_t\}$ starting at a smooth compact star-shaped initial domain $K_0\subset\R^{n+1}$.
We denote by $\alpha=\alpha(K_0)>0$ and $\beta=\beta(K_0)>0$ the constants from Definition \ref{def_noncoll} and Proposition \ref{cor_H_lower_bound}, respectively.
In this section, we give the proofs in the smooth setting; we refer to Section \ref{sec_weaksol} for the extension of the results to the setting of weak solutions (level set flow).

\subsection{Local curvature estimate}
Our first main estimate gives curvature control on a parabolic ball of definite size, from a bound on the mean curvature $H$ at a single point.

\begin{theorem}[Local curvature estimate]\label{thm-local_curvature_est}
There exist $\rho=\rho(\al, \beta)>0$ and $C_l=C_l(\al,\beta)<\infty$ with the following property.
If $\mathcal{K}$ is a mean curvature flow
with star-shaped initial condition, defined
 in a parabolic ball $P(p,t,r)$ centered at a boundary point $p\in \D K_t$ with $H(p,t)\leq r^{-1}$, then $\mathcal{K}$ is smooth in the parabolic ball $P(p,t,\rho r)$, and
\begin{equation}\label{eqn-intro_curvature_estimate}
 \sup_{P(p,t,\rho r)}\abs{ \nabla^l A}\leq C_l r^{-(l+1)}\,.
\end{equation}
\end{theorem}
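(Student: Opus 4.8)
The plan is to argue by contradiction and blowup, following the strategy that works in the mean convex case \cite{haslhofer-kleiner_mean_convex}, but with the key modification that noncollapsing is controlled through $F = \langle X,\boldsymbol{\nu}\rangle + 2tH$ rather than through $H$ directly. First I would reduce to the scale $r=1$ by parabolic rescaling; by Remark \ref{remark_rescaling} the noncollapsing constant rescales to $\lambda^2\alpha$, and by Proposition \ref{cor_H_lower_bound} the lower bound $H \geq -C(\beta)$ rescales favorably (the rescaled lower bound only improves), so one must be slightly careful to track how $\beta$ enters, but the upshot is that it suffices to prove: there is $\rho>0$, $C_0<\infty$ such that any star-shaped flow in $P(p,t,1)$ with $H(p,t)\leq 1$ satisfies $|A|\leq C_0$ on $P(p,t,\rho)$. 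Once the $l=0$ bound is established on a parabolic ball of definite size, the higher-order estimates \eqref{eqn-intro_curvature_estimate} follow from interior estimates for the (quasilinear parabolic) mean curvature flow system — i.e. Ecker–Huisken/Schauder-type local regularity — applied on a slightly smaller ball, so the heart of the matter is the $C^0$ curvature bound.

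Next I would set up the contradiction: suppose no such $\rho, C_0$ exist. Then there is a sequence of pointed star-shaped flows $\mathcal{K}^j$ in $P(p_j, t_j, 1)$ with $H(p_j,t_j)\leq 1$ but $\sup_{P(p_j,t_j, 1/j)} |A| \to \infty$. Using a point-selection (Hamilton's trick / a parabolic version of Perelman's point-picking), I would choose points $q_j$ in the space-time track with $Q_j := |A|(q_j)$ large and such that $|A| \leq 2Q_j$ on a parabolic ball of radius $\sim Q_j^{-1}\cdot(\text{something} \to \infty)$ around $q_j$; rescale by $Q_j$ to get flows $\tilde{\mathcal{K}}^j$ with $|A|\leq 2$ near the basepoint and $|A|=1$ at the basepoint, defined on larger and larger parabolic balls. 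By the noncollapsing estimate (Theorem \ref{thm_noncollapsing}, together with its rescaled form) and the local curvature bound just assumed, the rescaled flows are smooth on balls of growing size and subconverge in $C^\infty_{\mathrm{loc}}$ to an ancient limit flow $\mathcal{K}^\infty$ which is nonflat ($|A|=1$ at the origin). The crucial point here is to verify that the rescaled flows remain $\alpha'$-noncollapsed with $\alpha'$ bounded below; since the rescaling factors $Q_j\to\infty$, the rescaled noncollapsing constant is $Q_j^{-2}\alpha_j$, so one needs $\alpha_j$ itself to blow up proportionally — this is exactly where $F$-noncollapsing (as opposed to $H$-noncollapsing) must be exploited, because along the flow $F$ grows like $2tH$, and one must translate the $F$-based noncollapsing into effective $H$-based (scale-invariant) noncollapsing on the region where $H$ is comparable to $|A|$. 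Concretely: on the high-curvature region we have $H \sim Q_j \to \infty$ (after checking $H>0$ there, which follows since $H\geq -C$ and $H$ dominates in magnitude once $|A|$ is large), while $\langle X,\boldsymbol{\nu}\rangle$ and $t$ stay bounded on $P(p_j,t_j,1)$, so $F = \langle X,\boldsymbol{\nu}\rangle + 2tH \sim 2tH \leq C H$, whence $\alpha/F \geq (\alpha/C)/H$, i.e. the flow is $(\alpha/C)$-noncollapsed in the usual $H$-sense on that region — and this is scale-invariant, so it passes to the limit.

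Finally, I would identify the blowup limit. The limit flow $\mathcal{K}^\infty$ is an ancient, smooth, embedded, $\alpha'$-noncollapsed (in the $H$-sense) mean curvature flow with bounded curvature, and it is weakly convex by the convexity estimate (the star-shaped convexity estimate of Smoczyk and Huisken–Sinestrari, Theorem \ref{thm_convexity_est}, whose hypotheses are scale-invariant and so survive the blowup). An ancient, convex, noncollapsed limit cannot have an interior point with $H=0$ unless it is a static hyperplane (a standard strong-maximum-principle splitting argument), and $H\geq 0$ with $|A|\neq 0$ somewhere combined with convexity forces $H>0$ somewhere; but then $H(p_j,t_j)\leq 1$ cannot survive: the basepoint $q_j$ of the blowup was at curvature scale $Q_j\to\infty$, yet it was reached from $(p_j,t_j)$ along a flow on which $H$ grew from a value $\leq 1$. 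Making this incompatibility precise — tracking the basepoint's relation to where $H$ was bounded, and ruling out that the limit is flat — gives the contradiction. The main obstacle, as indicated above, is the translation between $F$-noncollapsing and scale-invariant $H$-noncollapsing in the blowup region: this requires knowing that $t$ is bounded below away from $0$ (or handling the $t\to 0$ regime separately using Proposition \ref{cor_H_lower_bound} directly) and that $H>0$ there, neither of which is automatic and both of which encode the genuinely new difficulty of the star-shaped setting, namely that the speed $H$ is not signed.
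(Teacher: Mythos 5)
Your proposal takes a genuinely different route from the paper, and the difference is significant enough that several gaps open up.

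The paper does not point-pick. After rescaling around $(p_j,t_j)$ by $jr_j^{-1}$, the basepoint has $\hat H(0,0)\le j^{-1}\to 0$ while $\sup_{P(0,0,1)}|\hat A|\ge 1$, so the entire argument is to show the rescaled flows converge smoothly to a \emph{static halfspace}, which instantly contradicts $\sup_{P(0,0,1)}|\hat A|\ge 1$. To do this, the paper establishes two auxiliary claims: (a) pointed Hausdorff convergence of $\hat{\K}^j$ (and of the complements) to a halfspace, via sphere barriers and the $F$-noncollapsing condition; and (b) a one-sided minimization / local area bound $|\partial \hat K^j_t\cap B(x,r)|\le (1+\varepsilon)\omega_n r^n$, which together with (a) and White's local regularity theorem upgrades Hausdorff convergence to smooth convergence. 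The one-sided minimization step (b) is the genuine novelty in the star-shaped setting, and it is entirely absent from your proposal: because $H$ has no sign, the boundaries $\partial K_t$ do not foliate a region, so the calibration argument used for mean convex flows fails. The paper's solution is to pass to the continuously rescaled flow $\tilde K_\tau = t^{-1/2}K_t$, whose speed $-(\tilde H + \tfrac12\langle\tilde X,\tilde{\boldsymbol\nu}\rangle) = -F/(2\sqrt{t})$ \emph{is} signed, so the $\partial\tilde K_\tau$ do foliate; one then proves a \emph{weighted} (Gaussian) one-sided minimization for $\tilde K_\tau$ and converts it back. This is the step your approach would have to replace or re-derive, and you do not.

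Instead you try to extract a nonflat limit by point-picking inside $P(p_j,t_j,1/j)$, establish compactness from the resulting local curvature bound together with noncollapsing, and then identify the limit using the convexity estimate. Two problems. First, there is a circularity risk: as proved in this paper, Theorem \ref{thm_convexity_est} relies on Theorem \ref{thm-local_curvature_est} to extract a smooth blowup limit, so you cannot invoke it here unless you mean the external Smoczyk / Huisken--Sinestrari convexity estimate (which you mention, but then one must carefully check that that estimate, which is stated with constants depending globally on $K_0$, passes through your rescalings in the form you need). Second — and more seriously — your final step is not an argument. After point-picking, the rescaled limit has $|A|=1$ at the origin, but you give no mechanism for turning $H(p_j,t_j)\le 1$ into a contradiction: the scale-invariant distance from $q_j$ to $(p_j,t_j)$ can blow up, so the constraint at $(p_j,t_j)$ need not be visible in the limit. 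You would have to either (i) add a careful Perelman-style point-selection that keeps $(p_j,t_j)$ in the blowup region, or (ii) invoke a classification / Harnack-type rigidity statement for ancient convex noncollapsed flows ruling out any such solution with $|A|=1$ at a point while being approachable from a region of small $H$ — and neither is supplied. Your own text flags this as "making this incompatibility precise," which is precisely the heart of the matter. By contrast, the paper avoids all of this by rescaling at the original basepoint, so the contradiction is immediate once the halfspace convergence is smooth.

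The one part that aligns well with the paper is your discussion of translating the $F$-based noncollapsing into effective $H$-based noncollapsing on high-curvature regions (using $F\sim 2tH$ once $t\ge\sigma>0$); the paper does essentially this in the proof of Claim \ref{thm_one_sided} and in Theorem \ref{thm_blow_up_I}. But this is the easy part; the missing ingredient is the local area bound / one-sided minimization, and that is what you would need to either reproduce or convincingly circumvent.
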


As an immediate consequence of Theorem \ref{thm-local_curvature_est}, we obtain:

\begin{corollary}[Gradient estimate]
Suppose $\mathcal{K}$ is a mean curvature flow with star-shaped initial condition. Then we have the gradient estimate
$$
|\nabla A| \leq C H^2\,,
$$
where $C=C(K_0)<\infty$.
\end{corollary}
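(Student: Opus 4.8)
The plan is to deduce the pointwise gradient estimate $|\nabla A| \leq C H^2$ directly from the local curvature estimate (Theorem \ref{thm-local_curvature_est}) by a scaling argument, applying it at each spacetime point with the scale $r$ chosen to be $1/H$. Fix a boundary point $p \in \partial K_{t}$ and consider two cases. If $H(p,t) = 0$, then in fact the estimate as stated is vacuous at such points only if $|\nabla A|$ also vanishes there; to avoid this subtlety I would first note that by Proposition \ref{cor_H_lower_bound} the mean curvature is bounded below, $H \geq -C(\beta)$, and in the star-shaped setting the relevant estimate should really be read with $H$ replaced by $H + C(\beta) + 1$ or, more honestly, I would restrict attention to the region where $H$ is large — which is exactly where the estimate has content for the blowup analysis — and on the complementary region $\{H \leq C_0\}$ invoke the local curvature estimate once at a fixed scale $r = 1/C_0$ to bound $|\nabla A|$ by a constant, hence by $C H^2$ after adjusting. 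The clean statement to prove is thus: there is $C = C(K_0)$ with $|\nabla A|(p,t) \leq C \max(H(p,t), 1)^2$, which is equivalent to the stated one up to the harmless normalization implicit in ``$C = C(K_0)$''.

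The core step is the scaling argument. Suppose $H(p,t) > 0$ and set $r = 1/H(p,t)$. Then $H(p,t) = r^{-1}$, so the hypothesis of Theorem \ref{thm-local_curvature_est} is satisfied (provided the flow exists on $P(p,t,r)$; for $t$ bounded away from $0$ this holds once $r$ is small, and for the finitely many remaining points one rescales or uses that $H$ is bounded on $[0,\sigma]$ by the proof of Proposition \ref{cor_H_lower_bound}). The theorem with $l = 1$ gives $|\nabla A|(p,t) \leq \sup_{P(p,t,\rho r)} |\nabla A| \leq C_1 r^{-2} = C_1 H(p,t)^2$, which is exactly the claimed bound with $C = C_1(\alpha,\beta) = C(K_0)$. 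One subtlety worth spelling out: Theorem \ref{thm-local_curvature_est} as stated requires $r \leq$ some implicit bound for $P(p,t,r)$ to make sense relative to the existence interval, so I would phrase the argument as: either $H(p,t) \geq r_0^{-1}$ for the definite radius $r_0 = r_0(K_0)$ on which existence is guaranteed, in which case $r = 1/H \leq r_0$ works directly; or $H(p,t) < r_0^{-1}$, in which case applying the theorem at the fixed scale $r_0$ gives $|\nabla A|(p,t) \leq C_1 r_0^{-2} =: C'$, a constant depending only on $K_0$, and then $|\nabla A| \leq C' \leq C'' H^2$ on the region where $H \geq 1$, while the bound is absorbed into the constant elsewhere.

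The main obstacle — really the only non-formal point — is handling the locus where $H$ is small or negative, since there the naive inequality $|\nabla A| \leq C H^2$ is false (the right side can vanish while the left does not). This is why the honest statement carries the normalization built into ``$C = C(K_0)$'' and is understood to be applied in the large-curvature regime; in the star-shaped case the quantity that is genuinely positive and controls scaling is $F = \langle X, \boldsymbol{\nu}\rangle + 2tH$, not $H$ itself, but for the purpose of this corollary — whose role is purely to control $\nabla A$ near singularities, where $H \to \infty$ — the stated form with $H$ suffices. So the plan is: (i) reduce to the region $\{H \geq 1\}$, observing the complement contributes only a bounded term; (ii) on $\{H \geq 1\}$ apply Theorem \ref{thm-local_curvature_est} at scale $r = 1/H$ with $l = 1$; (iii) collect constants, all of which depend only on $\alpha(K_0)$ and $\beta(K_0)$, hence only on $K_0$.
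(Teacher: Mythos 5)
Your core argument---apply Theorem \ref{thm-local_curvature_est} with $l=1$ at scale $r = 1/H(p,t)$ to obtain $|\nabla A|(p,t) \leq C_1 r^{-2} = C_1 H(p,t)^2$---is exactly the intended one; the paper offers no separate proof beyond calling the corollary ``an immediate consequence,'' and you have reconstructed that implicit scaling argument correctly.

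Your reservations about the locus where $H$ is small or non-positive are well-taken, and in fact point to a genuine imprecision in the corollary as stated. In the star-shaped setting $H$ can vanish or be negative (only $F=\langle X,\boldsymbol{\nu}\rangle+2tH$ is guaranteed positive; $H$ is merely bounded below, by Proposition \ref{cor_H_lower_bound}), so the literal inequality $|\nabla A|\leq CH^2$ cannot hold at a point where $H=0$ and $\nabla A\neq 0$. Moreover, the hypothesis of Theorem \ref{thm-local_curvature_est} that the flow be defined on $P(p,t,r)$ forces $r\leq\sqrt{t}$, so the choice $r=1/H$ is only admissible when $H\geq t^{-1/2}$. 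Hence the estimate is really a statement in the high-curvature regime---which is the only place it is used (blowup analysis)---and your reading, either $|\nabla A|\leq C\max(H,1)^2$ or restriction to $\{H\geq H_0(K_0)\}$, is the honest version. One small inaccuracy in your case analysis: the clause ``the bound is absorbed into the constant elsewhere'' does not actually work, because on the set where $H$ is arbitrarily small no multiplicative constant $C''$ can make $C'\leq C''H^2$; but since your final paragraph flags precisely this as the real caveat, you clearly understand where the content lies.
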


\begin{proof}[{Proof of Theorem \ref{thm-local_curvature_est}}] Fix $\alpha$ and $\beta$. We will show that there exists a $\rho'>0$ such that the estimate \eqref{eqn-intro_curvature_estimate} holds for $l=0$ with $C_0 = \frac{1}{\rho'}$; the higher order derivative estimates then follow immediately from standard interior estimates (see e.g. \cite[Prop. 3.22]{Ecker_book}).

Suppose this doesn't hold. Then there are sequences of $\alpha$-noncollapsed flows $\left\{\mathcal{K}^j\right\}$,
boundary points $\{p_j \in \partial K_{t_j}\}$ and scales $\{r_j\}$, such that $\mathcal{K}^j$ is defined in $P(p_j,t_j,r_j)$ and $H(p_j, t_j)\leq r_j^{-1}$, but $\sup_{P(p_j,t_j,j^{-1}r_j)} |A|\geq j r_{j}^{-1}$.
After parabolically rescaling by $jr_j^{-1}$ and applying an isometry, we obtain a sequence $\{\hat{\mathcal{K}}^j\}$ of mean curvature flows defined in $P(0,0, j)$ with $H(0,0)\leq j^{-1}$, but
\begin{equation}\label{eqn_A_lower_bound}
\sup_{P(0,0,1)} |A| \geq 1\,.
\end{equation}
Moreover, we can choose coordinates such that the outward normal of $\hat{K}_0^j$ at $(0,0)$ is $e_{n+1}$. 
\begin{claim}[Halfspace convergence]\label{cl_halfspace_convergence}
The sequence of mean curvature flows $\{\hat{\mathcal{K}}^j\}$ converges in the pointed Hausdorff topology to a static halfspace in $\R^{n+1} \times (-\infty,0]$, and similarly for their complements.
\end{claim}
\begin{proof}[Proof of Claim \ref{cl_halfspace_convergence}] For $R < \infty, d>0$ let $\bar{B}_{R,d} = \overline{B((-R+d)e_{n+1}, R)}$ be the closed $R$-ball tangent to the horizontal hyperplane $\{x_{n+1} = d\}$ at the point $de_{n+1}$.  When $R$ is large, it will take time approximately $dR$ for $\bar{B}_{R,d}$  to leave the upper half space $\{x_{n+1} > 0\}$.

Since $0\in \partial \hat{K}_0^j$ for all $j$,  it follows that $\bar{B}_{R,d}$ cannot be contained in the interior of $\hat{K}_t^j$ for any $t\in [-T,0]$, where $T \simeq dR$.
Thus, for large $j$ we can find $d_j \leq d$ such that  $\bar{B}_{R,d_j}$ has interior contact with $\hat{K}_t^j$ at some point $\hat{q}_j$, where $\langle \hat{q}_j, e_{n+1}\rangle < d$ and $\|\hat{q}_j\|\lesssim \sqrt{dR}$.

The mean curvature of $\partial \hat{K}_t$ satisfies $\hat{H}(\hat{q}_j,t) \leq \frac{n}{R}$,
and therefore for $\partial K_t$ we have $H(q_j,s) \leq \frac{n j}{R r_j}$ where $s = (j^{-1}r_j)^2 t + t_j$.
Moreover, by avoidance principle for the mean curvature flow it is clear that
\begin{equation}\label{eqn_extinction_time}
s\leq \frac{D^2}{2n}\,,
\end{equation}
where $D$ is the diameter of $\partial K_0$.
Thus
\begin{equation}
F(q_j,s) \leq D +  \frac{ jD^2}{R r_j} \leq \frac{ 2jD^2}{R r_j}\,,
\end{equation}
provided $R\leq jr_j^{-1}D$. Since $K^j_t$ satisfies the $\al$-noncollapsing condition, there is a closed ball $\bar B_{j,o}$ with radius at least $\frac{\al R r_j}{2jD^2}$ making exterior contact with $K^j_s$ at $q_j$. Therefore, after rescaling, there is a closed ball $\bar B_j$ with radius at least $\frac{\al R}{2D^2}$ making exterior contact with $\hat{K}^j_0$ at $\hat{q}_j$.  By a simple geometric calculation, this implies that $\hat{K}^j_t$ has  height  $\lesssim \frac{D^2d}{\al} $ in the ball $B(0,R')$ where $R'$ is comparable  to $\sqrt{dR}$.  As $d$ and $R$ are arbitrary (in fact, $R$ is allowed to be larger and larger as $j$ increases provided $R\leq jr_j^{-1}D$; also note that $r_j^2\leq D^2/2n$), this implies that for any $T>0$,  and any compact subset $Y\subset\{x_N>0\}$, for large $j$ the time slice $\hat{K}^j_t$ is disjoint from $Y$, for all $t \geq -T$.   Likewise, for any $T>0$ and any compact subset $Y\subset \{x_N<0\}$, the time slice $\hat{K}^j_t$ contains $Y$ for all $t \in[-T, 0]$, and large $j$, because $\hat{K}^j_{-T}$ will contain a ball whose forward evolution under mean curvature flow contains $Y$ at any time $t\in [-T, 0]$. This proves the claim. \end{proof}

To finish the proof of the theorem, we need a variant of the one-sided minimization theorem, cf. \cite[Thm. 3.5]{white_size}, \cite[Rem. 2.6]{haslhofer-kleiner_mean_convex}. 

\begin{claim}[One-sided minimization for $\hat{K}_t^j$]\label{thm_one_sided}
For every $\varepsilon>0$, every $t\in [-T, 0]$ and every ball $B(x,r)$ centered on the hyperplane $\{x_{n+1} = 0\}$, we have
\begin{equation}
|\partial \hat{K}_{t}^j \cap B(x,r)| \leq (1+ \varepsilon)\omega_n r^n\,,
\end{equation}
for $j$ large enough.
\end{claim}

Combining Claim \ref{cl_halfspace_convergence}, Claim \ref{thm_one_sided} and the local regularity theorem for the mean curvature flow (see e.g. \cite{white_regularity}, \cite{Wang_regularity}), we see that $\{\hat{\mathcal{K}}^j\}$ converges
smoothly on compact subsets of spacetime to a static halfspace. In particular,
$$\limsup_{j\to \infty}\sup_{P(0,0,1)} |A| = 0 ;$$ this contradicts \eqref{eqn_A_lower_bound}. Modulo the proof of Claim \ref{thm_one_sided}, which we will give below, this concludes the proof of Theorem \ref{thm-local_curvature_est}.\end{proof}

To prove Claim \ref{thm_one_sided}, we will rescale the flow and prove a weighted version of the one-sided minimization result for the rescaled flow, and then convert it back to the original flow. The key is to make use of the fact that $F= \langle X, \boldsymbol{\nu} \rangle + 2tH>0$ along the flow. We first perform the continuous rescaling:
\begin{equation}\label{eqn_rescaling}
\tilde{X}(\cdot, \tau) = \frac{1}{\sqrt{t}} X(\cdot, t)\,, \quad \tau = \log t\,.
\end{equation}
Then $\tilde{X}(\cdot, \tau)$ satisfies the \textit{rescaled mean curvature flow equation}
\begin{equation}\label{eqn_rescaled_flow}
\left( \frac{\partial }{\partial \tau} \tilde{X} \right)^{\perp} = - \left( \tilde{H} + \frac{\langle \tilde{X}, \tilde{\boldsymbol{\nu}}\rangle}{2}\right) \tilde{\boldsymbol{\nu}}\,, \quad \tau \in (-\infty, \log T)\,.
\end{equation}
Note that the speed function on the right-hand side of \eqref{eqn_rescaled_flow} is negative:
\begin{equation}\label{eqn_rescaled_flow_mono}
- \left( \tilde{H} + \frac{\langle \tilde{X}, \tilde{\boldsymbol{\nu}}\rangle}{2}\right) = - \sqrt{t}\left(H + \frac{\langle X, \boldsymbol{\nu}\rangle}{2t}\right) < 0\,.
\end{equation}

Let $\{K_t=K_t^j\}$ be the sequence of flows from the proof of Theorem \ref{thm-local_curvature_est},
and denote by $\{\tilde{K}_\tau\}$ the associated rescaled mean curvature flows (we suppress the index $j$ in the notation).
Using \eqref{eqn_rescaled_flow} and \eqref{eqn_rescaled_flow_mono}, we see that the boundaries of the $\{\tilde{K}_{\tau}\}_{\tau\in (-\infty, \log T)}$ form a foliation of $\R^{n+1} \backslash \tilde{K}_{\log T}$ for any $T>0$ as long as the flow exits.

Now we define the \textit{weighted boundary area} of a compact set $S$ with sufficiently regular boundary to be
$$
\text{Area}_w(\partial S) = \int_{\partial S} e^{\frac{|x|^2}{4}} d\mu\,.
$$
Note that if $\partial S$ minimizes the weighted boundary area, then on $\partial S$ we have
$$
H + \frac{\langle X,\boldsymbol{\nu}\rangle}{2} = 0\,.
$$
\begin{claim}[Weighted one-sided minimization for rescaled flow]\label{thm_one_sided_rescaled}
The weighted boundary area of $\tilde{K}_{\tau}$ is less than or equal to the weighted boundary area of any smooth compact domain $S\supseteq\tilde{K}_{\tau}$.
\end{claim}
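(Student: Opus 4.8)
The plan is to exploit the foliation property established just before the claim. Since $H + \tfrac{\langle X,\boldsymbol{\nu}\rangle}{2} = -\tilde H - \tfrac{\langle \tilde X,\tilde{\boldsymbol{\nu}}\rangle}{2} < 0$ along the rescaled flow, the hypersurfaces $\{\partial \tilde K_\sigma\}_{\sigma \geq \tau}$ sweep out the region $\R^{n+1}\setminus \tilde K_{\log T}$ monotonically outward, so $\R^{n+1}\setminus \tilde K_\tau$ is foliated by the later slices. The first step is to set up a calibration-type argument: on the open set $\Omega = \R^{n+1}\setminus \tilde K_\tau$ (intersected with wherever the foliation is defined) let $\boldsymbol{\nu}$ be the unit normal field to the foliation leaves pointing in the direction of increasing $\sigma$, i.e. outward. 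Define the vector field $W = e^{|x|^2/4}\,\boldsymbol{\nu}$ on $\Omega$. The key computation is that the weighted divergence of $\boldsymbol{\nu}$ with respect to the Gaussian density equals the weighted mean curvature of the leaf through that point, which by \eqref{eqn_rescaled_flow_mono} is strictly negative; hence $\operatorname{div} W = e^{|x|^2/4}\big(\operatorname{div}\boldsymbol{\nu} + \tfrac12\langle x,\boldsymbol{\nu}\rangle\big) = e^{|x|^2/4}\big(H + \tfrac12\langle X,\boldsymbol{\nu}\rangle\big) < 0$ on $\Omega$.

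Next I would run the comparison. Let $S \supseteq \tilde K_\tau$ be any smooth compact domain; then $S \setminus \tilde K_\tau \subseteq \Omega$ (possibly after noting the foliation covers all of $\R^{n+1}\setminus \tilde K_{\log T}$ and $\tilde K_\tau \supseteq \tilde K_{\log T}$). Apply the divergence theorem to $W$ on the region $U = \Int(S)\setminus \tilde K_\tau$. The boundary of $U$ consists of $\partial S$ (with outward normal agreeing, up to sign, with $\boldsymbol{\nu}$'s orientation being outward from $S$) and $\partial \tilde K_\tau$ (with outward-from-$U$ normal equal to $-\boldsymbol{\nu}$, where $\boldsymbol{\nu}$ is outward from $\tilde K_\tau$). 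This yields
\begin{equation*}
0 > \int_U \operatorname{div} W = \int_{\partial S} e^{|x|^2/4}\,\langle \boldsymbol{\nu}, \boldsymbol{\nu}_{\partial S}\rangle\, d\mu - \int_{\partial \tilde K_\tau} e^{|x|^2/4}\, d\mu\,,
\end{equation*}
and since $|\langle \boldsymbol{\nu},\boldsymbol{\nu}_{\partial S}\rangle| \leq 1$ pointwise, the first integral is bounded above by $\operatorname{Area}_w(\partial S)$. Rearranging gives $\operatorname{Area}_w(\partial \tilde K_\tau) \leq \operatorname{Area}_w(\partial S)$, which is exactly the claim (with strict inequality unless $S = \tilde K_\tau$).

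The main obstacle I anticipate is regularity and the precise domain of the foliation: the leaves $\partial \tilde K_\sigma$ are smooth only while the smooth flow persists, and one must check that the foliation genuinely covers $S\setminus\tilde K_\tau$ for the given $S$ — this is where one uses that the rescaled flow exists and foliates $\R^{n+1}\setminus \tilde K_{\log T}$ for every $T$, together with $\tilde K_\tau \supseteq \tilde K_{\log T}$ when $\tau \leq \log T$, so by taking $T$ large the foliation covers any prescribed compact region. A secondary technical point is justifying the divergence theorem when $\partial S$ is merely Lipschitz or when $U$ is not smooth; this is handled by a routine approximation argument, first proving the inequality for smooth $S$ and then passing to the limit, using that $e^{|x|^2/4}$ is bounded on the relevant compact sets. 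I would also remark that this weighted one-sided minimization is the rescaled analogue of the standard one-sided minimization via the foliation by later time slices, so the structure of the argument parallels \cite[Thm. 3.5]{white_size} and \cite[Rem. 2.6]{haslhofer-kleiner_mean_convex}, the only new feature being the Gaussian weight coming from the rescaling.
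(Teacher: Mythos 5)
Your overall strategy --- building a calibration vector field $W = e^{|x|^2/4}\,\tilde{\boldsymbol{\nu}}$ out of the foliation normals, applying the divergence theorem on $S\setminus\tilde{K}_\tau$, and bounding $\int_{\partial S}\langle W,\boldsymbol{\nu}_{\partial S}\rangle\,d\mu$ by $\operatorname{Area}_w(\partial S)$ --- is exactly the paper's argument. But the proposal as written has a systematic chain of sign and orientation errors that make the concluding step invalid.

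First, the orientation of the foliation: since the speed of the rescaled flow is $-\bigl(\tilde{H} + \tfrac12\langle\tilde{X},\tilde{\boldsymbol{\nu}}\rangle\bigr) < 0$ in the direction of the outward normal (equation \eqref{eqn_rescaled_flow_mono}), the domains $\tilde{K}_\sigma$ are strictly shrinking, so $\tilde{K}_\sigma \supset \tilde{K}_\tau$ for $\sigma < \tau$. The exterior $\R^{n+1}\setminus\Int(\tilde{K}_\tau)$ is foliated by the \emph{earlier} slices $\{\partial\tilde{K}_\sigma\}_{\sigma\leq\tau}$, not the later ones; and the unit normal pointing in the direction of increasing $\sigma$ is the \emph{inward} normal, not the outward one. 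Second, and decisively, the sign of the divergence: taking $\boldsymbol{\nu}$ to be the outward normal to the leaves (as one wants, so that it agrees with $\boldsymbol{\nu}_{\partial\tilde{K}_\tau}$ on that boundary), one gets $\operatorname{div} W = e^{|x|^2/4}\bigl(\tilde{H} + \tfrac12\langle\tilde{X},\tilde{\boldsymbol{\nu}}\rangle\bigr)$, which by \eqref{eqn_rescaled_flow_mono} is strictly \emph{positive}, not negative. This is precisely what makes the calibration work: with $\operatorname{div} W \geq 0$ one gets
$$\operatorname{Area}_w(\partial\tilde{K}_\tau) \,=\, \int_{\partial S}\langle\boldsymbol{\nu},\boldsymbol{\nu}_{\partial S}\rangle\,e^{|x|^2/4}\,d\mu \,-\, \int_{S\setminus\tilde{K}_\tau}\operatorname{div} W \,\leq\, \int_{\partial S}\langle\boldsymbol{\nu},\boldsymbol{\nu}_{\partial S}\rangle\,e^{|x|^2/4}\,d\mu \,\leq\, \operatorname{Area}_w(\partial S)\,.$$
With the sign you asserted ($\operatorname{div} W < 0$), the two inequalities you produce point in incompatible directions --- you obtain $\operatorname{Area}_w(\partial\tilde{K}_\tau) > \int_{\partial S}\langle\boldsymbol{\nu},\boldsymbol{\nu}_{\partial S}\rangle e^{|x|^2/4}\,d\mu$ and $\int_{\partial S}\langle\boldsymbol{\nu},\boldsymbol{\nu}_{\partial S}\rangle e^{|x|^2/4}\,d\mu \leq \operatorname{Area}_w(\partial S)$, which combine to nothing --- so the final ``rearranging gives the claim'' step does not actually follow. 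Also, the displayed identity $H + \tfrac12\langle X,\boldsymbol{\nu}\rangle = -\tilde{H} - \tfrac12\langle\tilde{X},\tilde{\boldsymbol{\nu}}\rangle$ at the start of your argument is not a correct reading of \eqref{eqn_rescaled_flow_mono}; the untilded and tilded quantities there describe the same hypersurface in unrescaled vs.\ rescaled coordinates, and the relevant weighted mean curvature $\tilde{H} + \tfrac12\langle\tilde{X},\tilde{\boldsymbol{\nu}}\rangle$ equals $\sqrt{t}\cdot\tfrac{F}{2t} > 0$. Fixing the orientation of the foliation and the sign of the divergence makes your argument coincide with the paper's.
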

\begin{proof}[{Proof of Claim \ref{thm_one_sided_rescaled}}]
Recall that $\{\partial\tilde{K}_{\tau'}\}_{\tau'\leq \tau}$ foliates $\R^{n+1} \backslash \text{Int}(\tilde{K}_{\tau})$. Let $\tilde{\boldsymbol{\nu}}$ be the vector field in $\R^{n+1} \backslash \text{Int}(\tilde{K}_{\tau})$ defined by the outward unit normals of the foliation.
If $S\supseteq\tilde{K}_{\tau}$ is any smooth compact domain, then using the divergence theorem we can compute
\begin{align*}
&\text{Area}_w(\partial S) -  \text{Area}_w(\partial  \tilde{K}_{\tau}) \\
  &\quad\qquad \geq \int_{\partial S } \langle \tilde{\boldsymbol{\nu}}, \tilde{\boldsymbol{\nu}}_{\partial S}\rangle e^{\frac{|x|^2}{4}} d\mu -   \int_{\partial \tilde{K}_{\tau} } \langle \tilde{\boldsymbol{\nu}}, \tilde{\boldsymbol{\nu}}_{\partial \tilde{K}_{\tau} }\rangle e^{\frac{|x|^2}{4}} d\mu \\
 &\quad\qquad =\int_{S \backslash \tilde{K}_{\tau} }\left( \tilde{H} + \frac{\langle \tilde{X}, \tilde{\boldsymbol{\nu}}\rangle}{2}\right) e^{\frac{|x|^2}{4}} d\mu \,\geq \, 0\,.
\end{align*}
This proves the claim.
\end{proof}

\begin{proof}[Proof of Claim \ref{thm_one_sided}]
Note first that there exists a uniform constant $\sigma=\sigma(\alpha,\beta)>0$ such that $F=\langle X,\boldsymbol{\nu}\rangle +2tH \geq \sigma$ for any $t\in [0,\sigma]$
and such that $t_j\geq \sigma$ for $j$ large (since $\sup_{P(p_j,t_j,j^{-1}r_j)}|A|\to\infty$).

Since $\{\partial\tilde{K}_{\tau}\}_{\tau\in (-\infty, \log T)}$ foliates $\R^{n+1} \backslash \tilde{K}_{\log T}$ for any $T>0$ as long as the flow exits,
there is some $\Theta=\Theta(\alpha,\beta)<\infty$ such that $\tilde{K}_\tau$ is contained in $B_{\Theta}$ for any $\tau \geq \log \sigma$.
Moreover, by Claim \ref{thm_one_sided_rescaled}, $\tilde{K}_\tau$ is one-sided minimizing for the weighted area.
Also note that the rescaling factor between $\tilde{K}_\tau$ and $K_t$ is uniformly controlled for $\tau \geq \log \sigma$.

For any $\varepsilon >0$ there exists a constant $\delta=\delta(\varepsilon,\Theta)> 0$,
such that at any point $p\in \partial \tilde{K}_\tau$ ($\tau\geq \log\sigma$) we have
\begin{equation}\label{est_almostconstint}
1 - \varepsilon/2 \,\leq \,\frac{e^{\frac{|p|^2}{4}}\text{Area}(\partial \tilde{K}_{\tau} \cap B(p, \delta))}{\text{Area}_w(\partial \tilde{K}_{\tau} \cap B(p, \delta))} \,\leq \,1 + \varepsilon/2\,.
\end{equation}
Now at $p_j \in \partial K^j_{t}$, using the facts that $\mathcal{K}^j$ is $\alpha$-noncollapsed and that the parabolically rescaled flow $\hat{K}^j_t$ has height  $\lesssim \frac{D^2d}{\al} $ in the ball $B(0,R')$ where $R'$ is comparable  to $\sqrt{dR}$, we conclude that for any $r$ sufficiently small and $j$ sufficiently large:
\begin{align}\label{eqn_one_sided_m}
\text{Area}(\partial K_t^j \cap B(p_j, r)) \leq (1 + \varepsilon)\omega_n r^n\,.
\end{align}
Here, we used the estimate \eqref{est_almostconstint} and Claim \ref{thm_one_sided_rescaled} with $S$ obtained from $K^j_{t}$ by attaching a short solid cylinder over the approximate disk.
Rescaling to $\hat{\mathcal{K}}^j$ this completes the proof of Claim \ref{thm_one_sided}.
\end{proof}

\begin{remark}
\label{rem-forward_extension}
One may obtain a variant of the curvature estimate by considering
flows which are defined in $B(p,r)\times (t-r^2,t+\tau r^2]$ for some
fixed $\tau>0$, in which case the curvature bound holds in a suitable parabolic region extending forward in time. The proof is similar.
\end{remark}


\subsection{Convexity estimate}
In this section, we prove the following convexity estimate for mean curvature flow with star-shaped initial data.

\begin{theorem}\label{thm_convexity_est}
For all $\varepsilon>0$, there exists $\eta = \eta(\varepsilon, \alpha,\beta)<\infty$ with the following property. If $\mathcal{K}$ is a mean curvature flow with star-shaped initial condidition, defined in a parabolic ball $P(p,t,\eta r)$ centered at a boundary point $p \in \partial K_t$ with $H(p,t) \leq r^{-1}$, then
$$
\lambda_1 (p,t)\geq - \varepsilon r^{-1}\,.
$$
\end{theorem}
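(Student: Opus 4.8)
The plan is to follow the contradiction-compactness scheme already used for the local curvature estimate, and combine it with the rescaled (Gaussian-weighted) one-sided minimization from Claim \ref{thm_one_sided_rescaled}. Suppose the statement fails: then for some fixed $\varepsilon>0$ there are $\alpha$-noncollapsed flows $\mathcal{K}^j$ with star-shaped initial data, boundary points $p_j\in\partial K_{t_j}$, and scales $r_j$, such that $\mathcal{K}^j$ is defined in $P(p_j,t_j,jr_j)$, $H(p_j,t_j)\le r_j^{-1}$, but $\lambda_1(p_j,t_j)<-\varepsilon r_j^{-1}$. Parabolically rescale by $r_j^{-1}$ and apply an isometry so that the basepoint is $(0,0)$ and the outward normal there is $e_{n+1}$; the rescaled flows $\hat{\mathcal{K}}^j$ are defined in $P(0,0,j)$ with $\hat H(0,0)\le 1$ and $\hat\lambda_1(0,0)<-\varepsilon$.

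First I would establish a local curvature bound near the basepoint. Because $\hat H(0,0)\le 1$, the local curvature estimate (Theorem \ref{thm-local_curvature_est}) applied on $P(0,0,\rho)$ gives $\sup_{P(0,0,\rho)}|A|\le C_0$; hence after passing to a subsequence the flows $\hat{\mathcal{K}}^j$ converge smoothly on a fixed parabolic neighborhood of $(0,0)$ to a smooth limit flow $\hat{\mathcal{K}}^\infty$ through $(0,0)$, with $\hat H(0,0)\le 1$ and $\hat\lambda_1(0,0)\le-\varepsilon<0$. So the limit is a nontrivial smooth ancient-ish solution (defined at least on a fixed parabolic ball) that is \emph{not} convex at the spacetime origin. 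The goal is to derive a contradiction by showing the limit must in fact be convex, using two ingredients: (i) the convexity/pinching estimate of Huisken--Sinestrari for star-shaped flows mentioned in the introduction, valid on the approximating sequence, or alternatively (ii) a direct argument via the noncollapsing plus one-sided minimization. The cleanest route is to show the blowup limit is, near the origin, a piece of a \emph{mean convex} noncollapsed flow, and then invoke the known convexity estimate in the mean convex setting (Haslhofer--Kleiner / White / Huisken--Sinestrari) at the origin, contradicting $\hat\lambda_1(0,0)<0$.

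The key step that makes this work is the same mechanism as in Claim \ref{thm_one_sided}: using $F=\langle X,\boldsymbol\nu\rangle+2tH>0$, the continuous rescaling \eqref{eqn_rescaling}--\eqref{eqn_rescaled_flow_mono} shows the rescaled hypersurfaces foliate the exterior, so the original flow satisfies a genuine one-sided minimization inequality at scales that are uniformly controlled once $t\ge\sigma(\alpha,\beta)$ (and $t_j\ge\sigma$ for $j$ large since the curvature at $p_j$ blows up). Concretely, I would use this to obtain, at the original (unrescaled) scale, an $\alpha$-noncollapsing bound together with a lower bound $H\ge -Cr_j$ that becomes $\hat H\ge o(1)$ after rescaling; combined with $\hat H(0,0)\le 1$ and the smooth convergence, the limit flow has $\hat H\ge 0$ near the origin, i.e. is weakly mean convex, and is $\alpha$-noncollapsed (Theorem \ref{thm_noncollapsing}, Remark \ref{remark_rescaling}). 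Then the strong maximum principle for the evolution of $\lambda_1/H$ (or directly the Huisken--Sinestrari convexity estimate, which the introduction notes holds for star-shaped flows) forces $\lambda_1(0,0)\ge 0$, contradicting $\hat\lambda_1(0,0)<-\varepsilon$. Tracking the dependence of $\eta$ on $(\varepsilon,\alpha,\beta)$ is then just bookkeeping through the compactness argument.

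The main obstacle I expect is precisely the point flagged in the introduction: relating bounds on $F$ to bounds on $H$, and handling the fact that $H$ has no sign, so the flow is not a priori a foliation. In the blowup, one must rule out that the limit flow, while noncollapsed, still has a region of negative mean curvature near the origin that survives the rescaling. This is where the $F>0$ bound and the weighted one-sided minimization of Claim \ref{thm_one_sided_rescaled} are essential: they pin down the geometry (height bounds, area bounds) on balls of definite size, and force the blowup limit to be a multiplicity-one, smooth, \emph{mean convex} noncollapsed flow rather than something with oscillating curvature. Making this last implication quantitative — that the rescaled mean curvature at the basepoint cannot be very negative given the area/height control — is the technical heart, and I would isolate it as a lemma, proved again by a secondary contradiction-compactness argument reducing to the mean convex case where the convexity estimate is already known.
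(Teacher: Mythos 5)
Your proposal identifies the right starting moves (contradiction-compactness, rescale by $r_j^{-1}$, local curvature estimate for smooth subsequential convergence, lower bound $H\ge -C$ to make the limit weakly mean convex), but the core of the argument — how you actually get a contradiction from the limit flow — has a genuine gap, and a secondary issue is that you misidentify where the technical work lies.

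The main gap is in the final step. You pass to a smooth limit $\hat{\mathcal K}^\infty$ defined only on $P(0,0,\rho)$, with $\hat H\ge 0$ and $\hat\lambda_1(0,0)\le-\varepsilon$, and you then claim that ``the strong maximum principle for $\lambda_1/H$'' or ``the Huisken--Sinestrari convexity estimate in the mean convex setting'' forces $\hat\lambda_1(0,0)\ge 0$. Neither of these does the job as stated. The strong maximum principle for $\lambda_1/H$ says that an \emph{interior minimum} cannot be attained unless the function is constant; it does not produce a sign on $\lambda_1$ out of nothing. Without knowing that $(0,0)$ \emph{is} an interior minimum of $\lambda_1/H$ — i.e.\ that $\lambda_1/H\ge -\varepsilon$ in a whole spacetime neighborhood — there is no contradiction. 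Likewise the mean convex convexity estimate is an asymptotic statement as $H\to\infty$; applied to a fixed local piece with $H\le 1$ it gives nothing at $(0,0)$. (And citing ``the Huisken--Sinestrari convexity estimate for star-shaped flows'' directly would be circular: that is exactly the theorem being proved.) The missing idea is the \emph{minimal counterexample} device: let $\varepsilon_0$ be the infimum of the $\varepsilon$'s for which the theorem holds (after first handling large $\varepsilon$ directly from noncollapsing), and run the contradiction argument at $\varepsilon_0$. This is what yields, in the limit, both $H(0,0)=1$ (so $\lambda_1/H$ is well defined and negative at the origin — your version does not even rule out $H(0,0)=0$) and $\lambda_1/H\ge-\varepsilon_0$ on a full parabolic neighborhood $P(0,0,\rho')$, with equality at $(0,0)$. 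Only then does the strict maximum principle apply and produce the contradiction. You also implicitly need $H(p_j,t_j)\to\infty$ along the subsequence (the paper's step $I=\infty$, again via noncollapsing); without it the rescaled limit at the origin would be flat and the noncollapsing bound would already force $\hat\lambda_1(0,0)\to 0$, but you should make this explicit.

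The secondary issue is that you devote a large part of the proposal to the weighted one-sided minimization (Claim \ref{thm_one_sided_rescaled}) and to multiplicity-one/height control, calling this ``the technical heart.'' In the paper's proof of the convexity estimate those ingredients play no role: they are used in the proof of the \emph{local curvature estimate} (Theorem \ref{thm-local_curvature_est}), which here is simply invoked as a black box to extract a smooth limit. The actual crux of Theorem \ref{thm_convexity_est} is the interaction of noncollapsing (to dispose of large $\varepsilon$ and to force $H(p_j,t_j)\to\infty$), the minimal counterexample trick (to get $H(0,0)=1$ and the two-sided pinching $\lambda_1/H\ge-\varepsilon_0$ near the origin), and the strict maximum principle for $\lambda_1/H$.
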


Theorem \ref{thm_convexity_est} immediately implies the following corollary.

\begin{corollary}
If $\mathcal{K}$ is a mean curvature flow with star-shaped initial condition,
then for all $\varepsilon>0$ there exists $0< H_0=H_0(\varepsilon,K_0)<\infty$ such that if $H(p,t)\geq H_0$ then $\frac{\lambda_1}{H}(p,t) \geq - \varepsilon$\,.
\end{corollary}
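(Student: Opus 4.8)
The plan is a straightforward parabolic rescaling (point-picking) argument reducing the corollary to Theorem \ref{thm_convexity_est}. Fix $\varepsilon>0$ and let $\eta=\eta(\varepsilon,\alpha,\beta)$ be the constant from Theorem \ref{thm_convexity_est}. Given a boundary point $(p,t)$ with $H(p,t)$ large, the natural choice is to set $r:=1/H(p,t)$, so that $H(p,t)=r^{-1}$ and the pointwise hypothesis of Theorem \ref{thm_convexity_est} is met with equality. If we can guarantee that the flow is actually defined (and, in the smooth setting of this section, smooth) in the parabolic ball $P(p,t,\eta r)$, then Theorem \ref{thm_convexity_est} gives $\lambda_1(p,t)\geq-\varepsilon r^{-1}=-\varepsilon H(p,t)$, and dividing by $H(p,t)>0$ yields $\lambda_1/H(p,t)\geq-\varepsilon$, which is the assertion.

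So the only thing to verify is the existence of the backward parabolic neighbourhood $P(p,t,\eta r)=B(p,\eta r)\times(t-\eta^2r^2,t]$. The spatial part is never an issue, since $\{K_t\}$ is a flow of closed hypersurfaces defined on all of $\R^{n+1}$; the point is to check that $t-\eta^2 r^2\geq 0$, i.e. that the flow has existed long enough before time $t$. Here one uses the short-time curvature bound established in the proof of Proposition \ref{cor_H_lower_bound}: there is $\sigma=\sigma(\beta)>0$ with $\max_{\partial K_s}|A|\leq 2\max_{\partial K_0}|A|\leq 2\beta$ for all $s\in[0,\sigma]$, hence $H\leq 2\sqrt{n}\,\beta$ on $[0,\sigma]$. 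Choosing
$$
H_0:=\max\!\left(2\sqrt{n}\,\beta+1,\ \eta\,\sigma^{-1/2}\right),
$$
the hypothesis $H(p,t)\geq H_0$ forces $t>\sigma$ (otherwise $H(p,t)\leq 2\sqrt{n}\,\beta<H_0$), while $r=1/H(p,t)\leq 1/H_0\leq\sqrt{\sigma}/\eta$ gives $\eta^2r^2\leq\sigma<t$. Thus $P(p,t,\eta r)$ lies in the smooth part of the flow and Theorem \ref{thm_convexity_est} applies exactly as above.

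The argument is essentially routine; the only mildly delicate point — and the one place where the hypotheses genuinely enter — is the need to exclude the possibility that $H$ is already large at a time so early that the backward parabolic neighbourhood of definite (rescaled) size would reach before $t=0$. This is precisely what the short-time estimate rules out, and it is also where the dependence of $H_0$ on $K_0$ (through $\beta$ and $\sigma$, and through $\alpha$ via $\eta$) comes from. Equivalently, one could phrase this as: for each fixed $K_0$ there is a uniform lower bound, depending only on the chosen threshold, on the first time at which $H$ can attain that value; but the explicit choice of $H_0$ above is the cleanest way to record it.
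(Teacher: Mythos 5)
Your proof is correct and is exactly the implicit argument behind the paper's assertion that the corollary follows ``immediately'' from Theorem \ref{thm_convexity_est}: set $r=1/H(p,t)$ and check that the backward parabolic ball $P(p,t,\eta r)$ is contained in the flow's domain of existence. You also correctly isolate the one point that genuinely requires the hypotheses — ruling out that $H\geq H_0$ occurs at a time $t<\eta^2 r^2$ — and resolve it via the short-time $|A|$-bound from the proof of Proposition \ref{cor_H_lower_bound}, which is the same mechanism the paper uses to obtain $t_j\geq\sigma$ in the proofs of Claim \ref{thm_one_sided} and Theorem \ref{thm_blow_up_I}.
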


\begin{remark}
As mentioned in the introduction, a similar convexity estimate has been proved by Smoczyk \cite[Thm. 1.1]{Smoczyk_star_shaped} for $n=2$.
\end{remark}

\begin{proof}[Proof of Theorem \ref{thm_convexity_est}]
Fix $\alpha$ and $\beta$. We first show that the theorem holds for $\varepsilon \geq \frac{2D^2}{n\alpha}$, where $D$ is the diameter of $\partial K_0$.
To see this, we choose $\eta=\sqrt{n/2}$ and note that since the flow existed in the parabolic ball $P(p,t,\eta r)$ we have $(\eta r)^2 \leq t \leq \frac{D^2}{2n}$, c.f. \eqref{eqn_extinction_time}. Now the $\alpha$-noncollapsing (Theorem \ref{thm_noncollapsing}) gives interior and exterior balls of radius at least $\alpha/F(p,t)$ and thus
\begin{equation}\label{eqn_eigenvalue}
\lambda_1 (p,t) \geq - \frac{F(p,t)}{\alpha} \geq - \frac{D+ n^{-1}D^2r^{-1}}{\alpha}\geq - \varepsilon r^{-1},
\end{equation}
where we used that $\varepsilon \geq \frac{2D^2}{n\alpha}$ and $r\leq \frac{D}{n}$ by our choice of $\eta$. 

Let $\varepsilon_0 \leq  \frac{2D^2}{n\alpha}$ be the infimum of the $\varepsilon$'s for which the assertion of the theorem holds, and suppose towards a contradiction that $\varepsilon_0 >0$.

It follows that there is a sequence $\{\mathcal{K}^j\}$ defined in $P(p_j, t_j, \eta_j r_j)$ with $H(p_j, r_j) \leq r_j^{-1}$ and $\eta_j \to \infty$, but $\lambda_1(p_j, t_j) r_j \to - \varepsilon_0$. Now since
$$
(\eta_j r_j)^2 \leq t_j \leq \frac{D_j^2}{2n}
$$
is uniformly bounded, we have $r_j \to 0$. It follows that $\lambda_1(p_j, t_j) \to -\infty$.
Let $I:=\liminf_{j\to \infty}H(p_j, t_j)$. If $I<\infty$, then by the $\alpha$-noncollapsing condition we have
$$
\lambda_1(p_j, t_j) \geq - \frac{F(p_j, t_j)}{\alpha} \geq - \frac{D_j + D_j^2I/n}{\alpha}
$$
for some arbitrarily large integers $j$; a contradiction. Thus, $I=\infty$.

Parabolically rescaling by $r_j^{-1}$ and applying an isometry, we obtain a sequence $\{\hat{\mathcal{K}}^j\}$ of flows defined in $P(0,0, \eta_j)$ with $(0,0)\in \partial \hat{\mathcal{K}}^j, 0< H(0,0)\leq 1$ for all $j$, but $\lambda_1(0,0) \to -\varepsilon_0$ as $j\to \infty$. 
After passing to a subsequence, $\{\hat{\mathcal{K}}^j\}$  converges smoothly to a mean curvature flow $\hat{\mathcal{K}}^\infty$ in the parabolic ball $P(0,0,\rho)$, where $\rho=\rho(\alpha, \beta)$ is the quantity from Theorem \ref{thm-local_curvature_est}. For $\hat{\mathcal{K}}^\infty$ we have $\lambda_1(0,0)=-\eps_0$, and thus $H(0,0)=1$.
By continuity $H>\frac12$ in  $P(0,0,\rho')$ for some $\rho'\in (0,\rho)$.
Since $\varepsilon_0$ is the infimum of the $\varepsilon$'s for which the assertion of the theorem holds and since $I=\infty$,
it follow that $\tfrac{\lambda_1}{H}\geq -\varepsilon_0$ in $P(0,0,\rho')$.
Thus, $\tfrac{\lambda_1}{H}$ attains a negative minimum at $(0,0)$; this contradicts the strict maximum principle
(see e.g. \cite[App. A]{haslhofer-kleiner_mean_convex}, \cite[Sec. 8]{Hamilton_tensor_maximum} or \cite[App. A]{white_nature}).
\end{proof}

\subsection{Blowup theorem}
The next theorem shows that for mean curvature flow with star-shaped initial condition, we can pass to blowup limits smoothly and globally.

\begin{theorem}[Blowup theorem]\label{thm_blow_up_I}
Let $\K$ be a mean curvature flow with star-shaped initial condition. Let $\{(p_j,t_j)\in \D \K\}$
be a sequence of boundary points with $\lambda_j:=H(p_j, t_j) \to \infty$. 
Then, after passing to a subsequence, the flows  $\hat{\K}^j$ obtained from $\K$
by the rescaling $(p,t)\mapsto (\lambda_j(p-p_j),\lambda_j^2(t-t_j))$ converge smoothly and globally:
\begin{align}
 \hat{\K}^j\ra\K^\infty \qquad\qquad\qquad C^\infty_{\textrm{loc}} \,\, \textrm{on}\,\, \R^{n+1}\times(-\infty,0].
\end{align}
The limit $\K^\infty$ is a mean convex $\hat{\al}$-noncollapsed flow (i.e. admits interior and exterior balls of radius $\hat{\alpha}/H(p)$) for some $\hat{\alpha}=\hat{\alpha}(\alpha,\beta)>0$, and has convex time slices.
\end{theorem}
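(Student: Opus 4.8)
\medskip

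\noindent\emph{Proof proposal.} The plan is to run a compactness argument powered by the three main estimates of this section: extract $\K^\infty$ as a smooth blowup limit, then read off its geometry. \textbf{Domain of definition.} First I would pin down the time interval on which the rescaled flows live. Since $|A|$ stays bounded on $[0,\sigma]$ for some $\sigma=\sigma(\be)>0$ by (the proof of) Proposition \ref{cor_H_lower_bound}, the hypothesis $\lambda_j=H(p_j,t_j)\to\infty$ forces $t_j\geq\sigma$ once $j$ is large, while $t_j\leq D^2/2n$ by comparison with a large sphere, cf.\ \eqref{eqn_extinction_time}; here $D=\diam(\D K_0)\leq\be$. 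Hence $\K$ is smooth on $\R^{n+1}\times[0,t_j]$ with $\lambda_j^2 t_j\geq\sigma\lambda_j^2\to\infty$, so each $\hat\K^j$ is a smooth flow on $\R^{n+1}\times[-\lambda_j^2 t_j,0]$, and these intervals exhaust $(-\infty,0]$. In particular $\hat\K^j$ is defined in every fixed parabolic ball $P(0,0,R)$ once $j\gg1$, which makes the local curvature and convexity estimates applicable on arbitrarily large balls as $j\to\infty$.

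\textbf{Local smooth convergence.} Since $H(0,0)=1$ for $\hat\K^j$, Theorem \ref{thm-local_curvature_est} (with $r=1$) gives $\sup_{P(0,0,\rho)}|\nabla^l A|\leq C_l$ for $\rho=\rho(\al,\be)>0$ and $C_l=C_l(\al,\be)<\infty$. After passing to a subsequence and invoking the Arzel\`a--Ascoli theorem together with local compactness of the domains $\hat K^j_t$ and their complements, $\hat\K^j\to\K^\infty$ smoothly on $P(0,0,\rho)$, where $\K^\infty$ is a smooth mean curvature flow with $(0,0)\in\D\K^\infty$ and $H(0,0)=1$. The same argument upgrades any uniform curvature bound on a parabolic ball into smooth subsequential convergence there.

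\textbf{Properties of the limit.} Next I would verify the three stated properties, each of which already follows once $\hat\K^j$ converges smoothly on a fixed ball while being defined on arbitrarily large balls. \emph{Mean convexity:} $H\geq-\be/\lambda_j\to0$ on $\hat\K^j$ by Proposition \ref{cor_H_lower_bound}, hence $H\geq0$ on $\K^\infty$; if $H$ vanishes anywhere then $H\equiv0$ by the strong maximum principle (using $\partial_t H=\Delta H+|A|^2H$) and $\K^\infty$ is a static plane, consistent with the claim. \emph{Convex time slices:} for fixed $\eps>0$, Theorem \ref{thm_convexity_est} applied at a point $(q,s)$ of $\hat\K^j$ with $H(q,s)>0$ (taking $r=H(q,s)^{-1}$, admissible for $j\gg1$) gives $\lambda_1(q,s)\geq-\eps H(q,s)$; letting $j\to\infty$ and then $\eps\to0$ yields $\lambda_1\geq0$ on $\K^\infty$, whence $|A|^2=\sum_i\lambda_i^2\leq H^2$. \emph{Noncollapsing with $H$ in place of $F$:} since $0\in\Int K_0$ and $K_t\subset\overline{B(0,D)}$, we have $|\langle X,\boldsymbol{\nu}\rangle|\leq D$ and $0\leq t\leq D^2/2n$ along the flow, so $F=\langle X,\boldsymbol{\nu}\rangle+2tH\leq D+(D^2/n)H\leq(2D^2/n)H$ wherever $H\geq n/D$; thus by Theorem \ref{thm_noncollapsing} the flow $\K$ admits interior and exterior balls of radius $\geq\hat\al/H$ at every point with $H\geq n/D$, with $\hat\al:=\al n/2D^2=\hat\al(\al,\be)$. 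This condition is scale invariant and its threshold $n/D$ rescales to $n/(D\lambda_j)\to0$, so $\K^\infty$ admits interior and exterior balls of radius $\hat\al/H$ at every point.

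\textbf{Globalization --- the main obstacle.} It remains to promote smooth convergence from $P(0,0,\rho)$ to all of $\R^{n+1}\times(-\infty,0]$. I would argue by continuity: let $R_\ast\in[\rho,\infty]$ be the supremum of $R$ for which, along a diagonal subsequence, $\hat\K^j$ converges smoothly on $P(0,0,R)$, and suppose $R_\ast<\infty$. Then $\sup_{P(0,0,R_\ast)}|A|=\infty$ along the sequence; by the convexity estimate applied to $\hat\K^j$, $|A|\leq(1+o(1))H$ near high-curvature points, so equivalently $\sup_{P(0,0,R_\ast)}H=\infty$. Choosing points $(q_j,s_j)$ realizing a weighted curvature maximum in $P(0,0,R_\ast)$ (a Hamilton-type point selection), rescaling by $Q_j:=H(q_j,s_j)\to\infty$, and applying Theorem \ref{thm-local_curvature_est} once more, one obtains on ever larger parabolic balls uniform curvature bounds, hence a smooth, ancient, convex, $\hat\al$-noncollapsed, mean convex limit $\tilde\K^\infty$ with $\tilde H(0,0)=1$ (its properties established exactly as above for the total rescaling $\lambda_jQ_j$). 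Tracking the positions of the reference points and using the strong maximum principle for $H$ then yields a contradiction with $R_\ast<\infty$, following the scheme of \cite{haslhofer-kleiner_mean_convex}. I expect this step to be the crux: in contrast to the mean convex case, the prelimit flows $\hat\K^j$ carry a speed $H$ of indefinite sign and hence do not move monotonically, so there is no foliation available to prevent loss of geometry away from the base point, and one must instead combine the local curvature estimate, the convexity estimate and noncollapsing with care.
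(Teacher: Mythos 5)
Your domain-of-definition step, the local smooth convergence on $P(0,0,\rho)$, and the identifications of mean convexity, convex time slices, and $\hat\alpha$-noncollapsing on the limit are all fine, and in fact quite close in spirit to the paper. But you correctly flag the globalization as the crux, and that is where there is a genuine gap: you do not establish a \emph{positive} lower bound for $\hat H$ on the prelimit flows, you only show $\hat H\geq -\beta/\lambda_j\to 0$. As a result you have no mean convexity (hence no local foliation, no one-sided minimization, no local area bound) on the $\hat\K^j$ themselves, and your proposed Hamilton-type point-picking contradiction argument is left unfinished --- you identify exactly the difficulty (``no foliation available'') but do not resolve it.

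The missing idea, and the one thing that makes the paper's proof work, is the following claim: there exist $\varepsilon>0$ and $\eta_j\to\infty$ such that $\hat H(x,t)\geq \varepsilon/(1+\eta_j)>0$ on $P(0,0,\eta_j)$. This is proved by contradiction from the local curvature estimate itself: if $\hat H$ were too small at some point $(x_j,t_j)\in P(0,0,\eta_j)$, then Theorem \ref{thm-local_curvature_est} together with its forward-in-time extension (Remark \ref{rem-forward_extension}), applied centered at $(x_j,t_j)$, would force $\hat H(0,0)<1$ for $j$ large --- contradicting $\hat H(0,0)=1$. Once $\hat H>0$ on $P(0,0,\eta_j)$ is in hand, one gets both mean convexity of $\hat\K^j$ there and, since $2tH$ then dominates $\langle X,\boldsymbol\nu\rangle$, that $F$-noncollapsing implies $H$-noncollapsing with a uniform $\hat\alpha=\hat\alpha(\alpha,\beta)>0$ on $P(0,0,\eta_j)$. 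At that point the paper simply invokes the global convergence theorem for mean convex $\hat\alpha$-noncollapsed flows \cite[Thm.~1.12]{haslhofer-kleiner_mean_convex} as a black box, which is precisely the globalization machinery you were trying to rebuild by hand. Without the positive $H$-lower bound on the prelimit flows, neither that black box nor your point-picking scheme can close the argument.
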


\begin{proof}

Since $\lambda_j = H(p_j, t_j) \to \infty$, we have that $t_j \geq \sigma>0$, for some uniform constant $\sigma>0$. By comparison with spheres, $t_j\leq T(\beta)<\infty$ where $\beta$ is from Proposition \ref{cor_H_lower_bound}.

There is a constant $\varepsilon>0$ and a sequence $\eta_j\to \infty$ such that the rescaled flow $\hat{\K}^j$ satisfies $\hat{H}(x,t)\geq \frac{\varepsilon}{1+\eta_j}$ in $P(0,0,\eta_j)$.
If not, the local curvature estimate (Theorem \ref{thm-local_curvature_est} and Remark \ref{rem-forward_extension}) centered at points with too small curvature yields $\hat{H}(0,0)<1$ for $j$ large; a contradiction.

By the above, the term $2tH$ is larger than $\langle X,\boldsymbol{\nu}\rangle$ in increasing parabolic neighborhood of the basepoint.
Therefore, $\hat{\K}^j$ is \emph{mean convex} $\hat{\alpha}$-noncollapsed in $P(0,0,\eta_j)$, where $\hat{\alpha}=\hat{\alpha}(\alpha,\beta)>0$. We can now apply the global convergence theorem \cite[Thm 1.12]{haslhofer-kleiner_mean_convex} to get that a limit $\K^\infty$, which is a mean convex $\hat{\alpha}$-noncollapsed flow with convex time slices.
\end{proof}


\section{Estimates for weak solutions}\label{sec_weaksol}

\subsection{Elliptic regularization and consequences}

Let $K_0 \subset \R^{n+1}$ be a star-shaped domain and let $\{K_t\}_{t\geq 0}$ be the level set flow starting at $K_0$, see Section \ref{sec_prelim}.
By a result of Soner \cite[Sec. 9]{Soner}, the flow is nonfattening.
As in Section \ref{sec_mainest}, we consider the rescaled flow $\tilde{K}_\tau=t^{-1/2}K_t$ where $\tau=\log t$ and $t\in[\sigma,T]$.

We will now adapt the elliptic regularization from Evans-Spruck \cite[Sec. 7]{evans-spruck} to our setting. 
The rescaled level set flow $\{\tilde{K}_\tau\}_{\log \sigma \leq \tau\leq \log T}$
can be described by the time of arrival function ${v}: \tilde{K}_{\log \sigma} \to \R$ defined by $ (x_1,...,x_{n+1}) = \mathbf{x} \in \partial \tilde{K}_\tau\Leftrightarrow {v}(\mathbf{x})  +\log\sigma = \tau $.
The function ${v}$ satisfies
\begin{equation}\label{eqn_elliptic_regularization_1}
-\text{div} \left(\frac{D {v}}{|D {v}|}\right) - \frac{1}{2}{\left\langle \mathbf{x}, \frac{D {v}}{|D{v}|}\right\rangle} = \frac{1}{|D {v}|}\,,
\end{equation}
in the viscosity sense. The solution $v$ arises as uniform limit of smooth functions $v^{\varepsilon}: \tilde{K}_{\log \sigma} \to \R$
solving the regularized equation
\begin{equation}\label{eqn_elliptic_regularization_approx}
-\text{div} \left(\frac{(D {v^\varepsilon},-\varepsilon)}{\sqrt{\varepsilon^2+|D {v^\varepsilon}|^2}}\right) - \frac{1}{2}{\left\langle \left(\mathbf{x},x_{n+2}\right), \frac{(D {v^\varepsilon},-\varepsilon)}{\sqrt{\varepsilon^2+|D {v^\varepsilon}|^2}}\right\rangle} = \frac{1}{\sqrt{\varepsilon^2+|D {v^\varepsilon}|^2}}\,,
\end{equation}
with Dirichlet boundary conditions. Geometrically, equation \eqref{eqn_elliptic_regularization_approx} says that $\tilde{N}_{\log\sigma}^{\varepsilon} = \text{graph}\left(\frac{{v}^{\varepsilon} }{\varepsilon} \right)$ satisfies
\begin{equation}\label{eqn_elliptic_regularization_geom}
\vec{H} - \frac{X^{\perp} }{2} =  -\frac{1}{\varepsilon} \boldsymbol{e}_{n+2}^{\perp}\,,
\end{equation}
or equivalently that  $\tilde{N}_{\tau}^{\varepsilon} = \text{graph}\left(\frac{{v}^{\varepsilon}+ \log\sigma - \tau} {\varepsilon} \right), \tau \geq \log \sigma$,  is a translating solution of the rescaled mean curvature flow \eqref{eqn_rescaled_flow}.
Using a barrier argument as in  \cite[Sec. 7]{evans-spruck} we obtain the $C^0$-estimate
\begin{equation}
c\, \text{dist}(\mathbf{x}, \partial \tilde{K}_{\log \sigma}) \leq {v}^{\varepsilon}(\mathbf{x})  \leq c^{-1} \,\text{dist}(\mathbf{x}, \partial \tilde{K}_{\log \sigma})\,,
\end{equation}
for some uniform constant $c>0$. Multiplying by $\sqrt{\varepsilon^2+|D {v^\varepsilon}|^2}$ and taking the first partial derivative $D_{x_l}$ on both sides of equation \eqref{eqn_elliptic_regularization_approx} (replacing $x_{n+2}$ by $\frac{v^\varepsilon}{\varepsilon}$), we get
\begin{multline}\label{eqn_elliptic_regularization_deriv}
- \left(\delta_{ij} - \frac{{v}^{\varepsilon}_{x_i} {v}^{\varepsilon}_{x_j}}{\varepsilon^2 + |D {v}^{\varepsilon}|^2}\right) ({v}^{\varepsilon}_{x_l})_{x_i x_j} +  \frac{2 ({v}^{\varepsilon}_{x_l})_{x_i} {v}^{\varepsilon}_{x_j}}{\varepsilon^2 + |D {v}^{\varepsilon}|^2}  {v}^{\varepsilon}_{x_i x_j}\\
 -  \frac{2 {v}^{\varepsilon}_{x_i} {v}^{\varepsilon}_{x_j} {v}^{\varepsilon}_{x_k}({v}^{\varepsilon}_{x_l})_{x_k}}{\left(\varepsilon^2 + |D {v}^{\varepsilon}|^2\right)^2} {v}^{\varepsilon}_{x_i x_j} 
 -\frac{x_k({v}^{\varepsilon}_{x_l})_{x_k}}{2}  = 0\,.
\end{multline}
Thus, by the  maximum principle, we obtain the Lipschitz estimate
\begin{equation}
|D {v}^{\varepsilon}| \leq C,
\end{equation}
for some uniform constant $C<\infty$.
Therefore, as $\varepsilon$ tends to zero the functions ${v}^{\varepsilon}$ indeed converge uniformly to ${v}$, and $v$ is Lipschitz.

Now for $(\mathbf{x},x_{n+2})\in \tilde{N}_{\tau}^\varepsilon$ we have $\tau = v^\varepsilon(\mathbf{x}) +\log \sigma -\varepsilon x_{n+2}$. Thus, the time of arrival function of $\{\tilde{N}^\varepsilon_\tau\}$ is given by
\begin{equation}
 V^\varepsilon (\mathbf{x},x_{n+2}) =  v^\varepsilon(\mathbf{x})+ \log \sigma - \varepsilon x_{n+2}
\end{equation}
For $\varepsilon\to 0$ it converges locally uniformly to $V(\mathbf{x},x_{n+2}) = v(\mathbf{x}) + \log \sigma$, which is
the time of arrival function of $\{\partial \tilde{K}_\tau\times \R\}$.
Thus, for $\varepsilon\to 0$ the space-time tracks $\tilde{\mathcal{N}}^\varepsilon$ Hausdorff converge to $\tilde{\mathcal{K}}$, and similarly for their complements.
Together with Lemma \ref{lem_elliptic_approximator} below, we can now finish the argument as in \cite[Sec. 4.3]{haslhofer-kleiner_mean_convex} to conclude that the estimates from Section \ref{sec_mainest} hold for the level set flow with star-shaped initial condition, provided the mean curvature is interpreted in the viscosity sense:

\begin{definition}[{\cite[Def. 1.3]{haslhofer-kleiner_mean_convex}}]\label{def_viscosity}
Let $K\subseteq \R^{n+1}$ be a closed set. If $p\in \D K$, then the {\em viscosity mean curvature of $K$ at $p$} is
$$
H(p)=\inf\{H_{\D X}(p)\mid X\subseteq K\;\text{is a compact smooth domain,}
\;  p\in \D X\},\\
$$
where $H_{\D X}(p)$ denotes the mean curvature of $\D X$ at $p$ with respect to the inward pointing normal (here $\inf\emptyset=-\infty$).
\end{definition}

\begin{lemma}[c.f. {\cite[Thm. 4.6 (1)]{haslhofer-kleiner_mean_convex}}]\label{lem_elliptic_approximator}
The elliptic approximators $\tilde{{N}}_\tau^\varepsilon$ admit interior and exterior balls of radius at least
$\frac{\alpha_{\varepsilon}}{e\sigma} \sqrt{\varepsilon^2+|D {v^\varepsilon}(\mathbf{x})|^2}$ at $\tilde{X}^\varepsilon(\mathbf{x},\tau) = \left(\mathbf{x}, \frac{v^\varepsilon(\mathbf{x}) + \log\sigma-\tau}{\varepsilon}\right) \in \tilde{{N}}_\tau^\varepsilon$, and $\liminf_{\varepsilon \to 0} \alpha_{\varepsilon} \geq \alpha$.
\end{lemma}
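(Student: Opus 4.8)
\textbf{Proof plan for Lemma \ref{lem_elliptic_approximator}.}

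The strategy is to transfer the noncollapsing estimate from the rescaled level set flow $\{\tilde K_\tau\}$ to the translating graphs $\tilde N^\varepsilon_\tau$ via the noncollapsing that is already available, and to keep careful track of the geometric meaning of the defining quantity $F$. First I would observe that each $\tilde N^\varepsilon_\tau$ is, by \eqref{eqn_elliptic_regularization_geom} and the remark following it, a translating solution of the rescaled mean curvature flow \eqref{eqn_rescaled_flow} in $\R^{n+2}$; undoing the continuous rescaling \eqref{eqn_rescaling}, this corresponds to an honest mean curvature flow $N^\varepsilon_t$ in $\R^{n+2}$ whose initial datum is (up to the scaling factor $\sqrt\sigma$) the graph of $v^\varepsilon/\varepsilon$ over $\tilde K_{\log\sigma}$, translated appropriately. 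The point is that this approximating flow has a one-parameter symmetry (translation in the $x_{n+2}$-direction combined with the rescaling flow), so the relevant $\alpha$-noncollapsing condition for it — in the variant with $F$ in place of $H$ — is genuinely preserved by Theorem \ref{thm_noncollapsing}, applied in one higher dimension.

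Next I would compute the quantity $F = \langle X,\boldsymbol\nu\rangle + 2tH$ for the flow $N^\varepsilon_t$ at the point $\tilde X^\varepsilon(\mathbf x,\tau)$, in terms of the data on $\tilde K_{\log\sigma}$. Because $\tilde N^\varepsilon_\tau = \mathrm{graph}\big((v^\varepsilon+\log\sigma-\tau)/\varepsilon\big)$ and because of the translating-solution identity \eqref{eqn_elliptic_regularization_geom}, one reads off that the relevant speed on $\tilde N^\varepsilon_\tau$, namely $\tilde H + \langle\tilde X,\tilde{\boldsymbol\nu}\rangle/2$, equals $\varepsilon^{-1}\langle\boldsymbol e_{n+2},\tilde{\boldsymbol\nu}\rangle$, and the outward unit normal of the graph is $(D v^\varepsilon,-\varepsilon)/\sqrt{\varepsilon^2+|Dv^\varepsilon|^2}$ up to sign. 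Undoing the rescaling (so that a factor $\sqrt{t}=e^{\tau/2}$ appears, with $\tau$ ranging over a bounded interval bounded below by $\log\sigma$, giving the $e\sigma$ in the denominator after accounting for the worst case), one finds that $F$ for $N^\varepsilon_t$ at the corresponding point is comparable to $e\sigma/\sqrt{\varepsilon^2+|Dv^\varepsilon(\mathbf x)|^2}$, up to the constant $\alpha_\varepsilon$ measuring the noncollapsing of the initial graph $\tilde N^\varepsilon_{\log\sigma}$. Then $\alpha_\varepsilon$-noncollapsing gives interior and exterior balls of radius $\alpha_\varepsilon/F \geq \frac{\alpha_\varepsilon}{e\sigma}\sqrt{\varepsilon^2+|Dv^\varepsilon(\mathbf x)|^2}$, which is exactly the claimed bound.

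It remains to show $\liminf_{\varepsilon\to 0}\alpha_\varepsilon \geq \alpha$. Here I would use that as $\varepsilon\to 0$ the approximators $\tilde N^\varepsilon_\tau$ (equivalently the space-time tracks $\tilde{\mathcal N}^\varepsilon$) Hausdorff converge to $\partial\tilde K_\tau\times\R$, whose noncollapsing constant is, after undoing the rescaling, exactly $\alpha$ by Theorem \ref{thm_noncollapsing} applied to $\{K_t\}$; a Hausdorff-continuity argument for the interior/exterior ball condition (the condition is closed under Hausdorff limits for a fixed radius) upgrades this to the liminf inequality for $\alpha_\varepsilon$. The main obstacle I anticipate is bookkeeping: correctly identifying which flow in $\R^{n+2}$ the translating graphs come from, getting the powers of $\sqrt t = e^{\tau/2}$ and the factor $\varepsilon$ right in the expression for $F$, and checking that the rescaling factor between $\tilde K_\tau$ and $K_t$ is uniformly controlled on $\tau\in[\log\sigma,\log T]$ so that all constants depend only on $\alpha,\beta$ (via $\sigma$ and $T$) and not on $j$ or $\varepsilon$. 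The geometry (translating solution $\Leftrightarrow$ noncollapsing preserved in one higher dimension) is the conceptual heart; everything else is the computation of $F$ on a graph.
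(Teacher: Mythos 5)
There is a genuine gap in your proposed argument, and it is at the decisive step, namely proving $\liminf_{\varepsilon\to 0}\alpha_\varepsilon\geq\alpha$. Your plan is to invoke Hausdorff convergence of $\tilde N^\varepsilon_\tau$ to $\partial\tilde K_\tau\times\R$ together with the fact that the interior/exterior ball condition at a fixed radius is closed under Hausdorff limits. But that closedness gives semicontinuity in the \emph{wrong} direction: it shows that if the approximators are noncollapsed at scale $\alpha_\varepsilon$, then the limit is noncollapsed at scale $\liminf\alpha_\varepsilon$, i.e.\ (roughly) $\alpha\geq\liminf\alpha_\varepsilon$. What the lemma needs is the reverse inequality $\liminf\alpha_\varepsilon\geq\alpha$, and there is no general principle by which knowing the noncollapsing of a Hausdorff (or even $C^0$) limit lets one pull the estimate back to the approximating sequence; a sequence of sets converging to a perfectly round one can have arbitrarily small noncollapsing constants. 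In addition, you never supply any control on the noncollapsing of the initial elliptic approximator $\tilde N^\varepsilon_{\log\sigma}$, so invoking the preservation Theorem~\ref{thm_noncollapsing} (which in any case is stated for closed hypersurfaces, not graphs with Dirichlet boundary) is circular: preservation of noncollapsing needs an initial value for the constant, and that is exactly the quantity $\alpha_\varepsilon$ the lemma is trying to estimate.

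The paper's proof closes exactly this gap using the translating structure in a stronger way than you do. You correctly observe that $\tilde N^\varepsilon_\tau$ is a translating soliton of the rescaled flow, and that therefore the noncollapsing constant is preserved; but the paper shows more: the explicit integration $\tilde F^\varepsilon(\mathbf x,\tau)=\tfrac{2+\log\sigma-\tau}{2}\big/\sqrt{\varepsilon^2+|Dv^\varepsilon(\mathbf x)|^2}$ (eqn.~\eqref{eqn_F_vareps_1}) shows the optimal noncollapsing ratio $I_\varepsilon(\tau)$ in \eqref{eqn_noncollapsing-constant} is literally \emph{independent of} $\tau$, and the crucial evolution inequality \eqref{eqn_evo_ZF_1} together with the maximum principle (applied on the compact time slice with Dirichlet boundary, for $\tau\in[1+\log\sigma,2+\log\sigma)$) forces the extremum to be attained on the fixed boundary $\partial\tilde K_{\log\sigma}$. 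Only near that boundary, and only at early times $\tau\leq 2+\log\sigma$, can one rely on \emph{smooth} (not merely Hausdorff) convergence of $\tilde N^\varepsilon_\tau$ to $\partial\tilde K_\tau\times\R$, and smooth convergence of the noncollapsing quantity $Z_\ast/F$ then gives the desired $\liminf_{\varepsilon\to 0}I_\varepsilon\geq -e\sigma/\alpha$, hence $\liminf\alpha_\varepsilon\geq\alpha$. Your proposal misses both the time-independence of $I_\varepsilon$ (you use the soliton property only to say noncollapsing is ``preserved'') and the maximum-principle localization of the extremum to the boundary, and without these the liminf inequality does not follow.
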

\begin{proof}
As in the proof of Theorem \ref{thm_noncollapsing}, consider
$$\tilde{Z}^\varepsilon(\mathbf{x},\mathbf{y},\tau)= \frac{2\left\langle \tilde{X}^\varepsilon(\mathbf{y},\tau) - \tilde{X}^\varepsilon(\mathbf{x},\tau),  \tilde{\boldsymbol{\nu}}^\varepsilon(\mathbf{x},\tau) \right\rangle}{\|  \tilde{X}^\varepsilon(\mathbf{y},\tau) -  \tilde{X}^\varepsilon(\mathbf{x},\tau)\|^2}$$
and
$$ \tilde{Z}^\varepsilon_\ast(\mathbf{x},\tau)= \inf_{(\mathbf{y}\neq (\mathbf{x}}  \tilde{Z}^\varepsilon(\mathbf{x},\mathbf{y},\tau)\,,\quad  \tilde{Z}_\varepsilon^\ast (\mathbf{x},\tau)= \sup_{\mathbf{y}\neq \mathbf{x}}  \tilde{Z}^\varepsilon(\mathbf{x},\mathbf{y},\tau)\,,$$
where $\mathbf{x} \in \tilde{K}_{\log \sigma}$. Here
\begin{equation}\label{eqn_Z}
 \tilde{\boldsymbol{\nu}}^\varepsilon (\mathbf{x}, \tau) =(-D {v^\varepsilon}(\mathbf{x}),\varepsilon)/\sqrt{\varepsilon^2+|D {v^\varepsilon}(\mathbf{x})|^2}
 \end{equation}
and 
\begin{equation}\label{eqn_X}
 \tilde{X}^\varepsilon(\mathbf{x},\tau) = \left(\mathbf{x}, \frac{v^\varepsilon(\mathbf{x}) + \log\sigma-\tau}{\varepsilon}\right) \in \tilde{{N}}_\tau^\varepsilon\,.
  \end{equation}
Since $\tilde{{N}}_\tau^\varepsilon$ is a translating solution of the rescaled mean curvature flow \eqref{eqn_rescaled_flow}, we denote ${N}_\tau^\varepsilon = e^{\tau/2}\tilde{{N}}_\tau^\varepsilon$ the mean curvature flow corresponding to $\tilde{{N}}_\tau^\varepsilon$ and $X^\varepsilon(\mathbf{x},t) = e^{\tau/2}\tilde{X}^\varepsilon(\mathbf{x},\tau)$, where $\tau = \log t$. Let 
$$
\tilde{F}^\varepsilon(\mathbf{x},\tau) = \tilde{F}^\varepsilon(\tilde{X}^\varepsilon(\mathbf{x},\tau))  = \tilde{H}^{\varepsilon}( \tilde{X}^\varepsilon(\mathbf{x},\tau)) +   \frac{\left\langle  \tilde{X}^\varepsilon(\mathbf{x},\tau) , \tilde{\boldsymbol{\nu}}^\varepsilon(\mathbf{x},\tau)\right\rangle}{2}
$$
and
\begin{equation}\label{eqn_F_10}
 F^\varepsilon (\mathbf{x},t)= 2 e^{\tau/2}\tilde{F}^\varepsilon (\mathbf{x},\tau)=  2tH^{\varepsilon}( X^\varepsilon(\mathbf{x},t)) + \left\langle  X^\varepsilon(\mathbf{x},t) , \boldsymbol{\nu}^\varepsilon(\mathbf{x},t)\right\rangle\,,
\end{equation}
cf. \eqref{eqn_rescaled_flow_mono}. 

Similarly, we define 
\begin{equation}\label{Z_ast}
Z^\varepsilon_\ast(\mathbf{x},t) = e^{-\tau/2} \tilde{Z}^\varepsilon_\ast(\mathbf{x},\tau) \quad \text{and}\quad Z_\varepsilon^\ast(\mathbf{x},t) = e^{-\tau/2} \tilde{Z}_\varepsilon^\ast(\mathbf{x},\tau)
\end{equation}
according to the rescaling\,. Then we have
\begin{equation}\label{eqn_ZoverF}
\frac{Z^\varepsilon_\ast}{F^\varepsilon}= \frac{\tilde{Z}^\varepsilon_\ast}{2e^{\tau}\tilde{F}^\varepsilon}\,.
\end{equation}
Now note that equation \eqref{eqn_elliptic_regularization_approx} is equivalent to
\begin{equation}\label{eqn_F_init}
\tilde{F}^\varepsilon (\mathbf{x}, \log\sigma) = 1/ \sqrt{\varepsilon^2+|D {v^\varepsilon}(\mathbf{x})|^2}\quad \text{for }\forall \,\,\mathbf{x} \in \tilde{K}_{\log \sigma}\,.
\end{equation}
Moreover, since $\tilde{{N}}_\tau^\varepsilon$ is a translating solution of the rescaled mean curvature flow \eqref{eqn_rescaled_flow} (so that for fixed $\mathbf{x}$ we know that $\tilde{Z}^\varepsilon_\ast (\mathbf{x},\tau), \tilde{Z}_\varepsilon^\ast (\mathbf{x},\tau)$ and $\tilde{H}^\varepsilon(\mathbf{x},\tau)$ are independent of $\tau$), using \eqref{eqn_Z} and \eqref{eqn_X} we have
$$
\frac{d}{d\tau}\tilde{Z}^\varepsilon_\ast (\mathbf{x},\tau)= 0
$$
and
\begin{equation}\label{eqn_F_varepsilon_2}
\frac{d}{d\tau} \tilde{F}^\varepsilon (\mathbf{x},\tau)=  \frac{1}{2}\frac{d}{d\tau}   \left\langle  \tilde{\boldsymbol{\nu}}^\varepsilon , \tilde{X}^\varepsilon\right\rangle (\mathbf{x},\tau)=  \frac{-1}{2\sqrt{\varepsilon^2+|D {v^\varepsilon}(\mathbf{x})|^2} }\,.
\end{equation}
Therefore, integrating \eqref{eqn_F_varepsilon_2} w.r.t. $\tau$ and using  \eqref{eqn_F_init} we have
\begin{equation}\label{eqn_F_vareps_1}
\tilde{F}^\varepsilon \left(\mathbf{x}, \tau \right) = \left (\frac{2+\log\sigma - \tau}{2}\right)/\sqrt{\varepsilon^2+|D {v^\varepsilon}(\mathbf{x})|^2}\,.
\end{equation}
Therefore
\begin{align}\label{eqn_F_varepsilon}
0= &\,\frac{d}{d\tau} \left(\frac{\left (2+\log\sigma - \tau\right) \tilde{Z}^\varepsilon_\ast}{2\tilde{F}^\varepsilon} \right) (\mathbf{x},\tau)\\
=&\,  \partial_\tau \left(\frac{\left (2+\log\sigma - \tau\right) \tilde{Z}^\varepsilon_\ast}{2\tilde{F}^\varepsilon} \right) (\mathbf{x},\tau) + \left\langle \tilde{\nabla } \left(\frac{\left (2+\log\sigma - \tau\right) \tilde{Z}^\varepsilon_\ast}{2\tilde{F}^\varepsilon} \right),  \frac{\mathbf{e}_{n+2}^{\text{T}}}{\varepsilon|  \mathbf{e}_{n+2}^{\text{T}}|}\right\rangle(\mathbf{x},\tau)\,,\notag
 \end{align}
where $  \mathbf{e}_{n+2}^{\text{T}}$ is the tangential part of $  \mathbf{e}_{n+2}$ at $\tilde{X}^\varepsilon(\mathbf{x},\tau)$ and $\partial_\tau \left(\frac{\left (2+\log\sigma - \tau\right) \tilde{Z}^\varepsilon_\ast}{2\tilde{F}^\varepsilon} \right)$  is the time derivative of $\frac{(2+\log\sigma - \tau ) \tilde{Z}^\varepsilon_\ast}{2\tilde{F}^\varepsilon} $ along the normal motion.

Now using \eqref{eqn_Z_ast}, \eqref{eqn_ALM} and \eqref{eqn_ZoverF} we obtain
 \begin{equation}\label{eqn_evo_ZF}
 \partial_\tau \frac{\tilde{Z}^\varepsilon_\ast}{\tilde{F}^\varepsilon} \geq \tilde{\Delta}\frac{\tilde{Z}^\varepsilon_\ast}{\tilde{F}^\varepsilon}+2 \left\langle  \tilde{\nabla} \log \tilde{F}^\varepsilon,  \tilde{\nabla}\frac{\tilde{Z}^\varepsilon_\ast}{\tilde{F}^\varepsilon}\right\rangle +\frac{\tilde{Z}^\varepsilon_\ast}{\tilde{F}^\varepsilon} \,,
 \end{equation}
 in the viscosity sense. Combining \eqref{eqn_F_varepsilon} and \eqref{eqn_evo_ZF} we obtain
 \begin{equation}\label{eqn_evo_ZF_1}
 0
\geq \frac{2+\log\sigma - \tau}{2}\left( \tilde{\Delta}\frac{\tilde{Z}^\varepsilon_\ast}{\tilde{F}^\varepsilon}+2 \left\langle  \tilde{\nabla} \log \tilde{F}^\varepsilon + \frac{\mathbf{e}_{n+2}^{\text{T}}}{2\varepsilon|  \mathbf{e}_{n+2}^{\text{T}}|},  \tilde{\nabla}\frac{\tilde{Z}^\varepsilon_\ast}{\tilde{F}^\varepsilon}\right\rangle\right) +\frac{(1+ \log\sigma -\tau )\tilde{Z}^\varepsilon_\ast}{2\tilde{F}^\varepsilon}\,, \end{equation}
 if $\tau <2 + \log\sigma$. Note that $2+\log\sigma - \tau>0$ and $1+ \log\sigma -\tau  \leq 0$ if $\tau \in [1+\log\sigma, 2+\log\sigma)$.

Therefore, by \eqref{eqn_F_varepsilon}, the quantity
\begin{equation}\label{eqn_noncollapsing-constant}
I_\varepsilon(\tau):=\min_{\tilde{N}^\varepsilon_\tau}\frac{\tilde{Z}^\varepsilon_\ast}{2\tilde{F}^\varepsilon/(2+\log\sigma - \tau)} 
\end{equation}
and the value of the optimal noncollapsing constant $\alpha_\varepsilon$ of $\tilde{N}^\varepsilon_\tau$ (with respect to the radius 
$\frac{1}{2\tilde{F}^\varepsilon/(2+\log\sigma - \tau)}$) are independent of time $\tau\geq \log\sigma$.
Moreover, at any time $\tau \in [1+\log\sigma, 2+\log\sigma)$ we can apply the maximum principle to equation \eqref{eqn_evo_ZF_1}  so that we know $I_\varepsilon(\tau)$ is attained at the boundary of $\tilde{N}^\varepsilon_\tau$.  Since $\{\tilde{N}^\varepsilon_\tau\}_{\tau \geq \log\sigma}$ converges locally uniformly to $\{\tilde{K}_\tau \times \R\}_{\tau\geq \log\sigma}$ as $\varepsilon \to 0$ (and the convergence is smooth at least until $\tau = 2 + \log\sigma$ if $\sigma$ is chosen sufficiently small), to find the limiting behavior of the noncollapsing constant as $\varepsilon \to 0$, we can simply look any time $\tau = 1 +\log\sigma$ to conclude that  (note also that $\tilde{K}_{1+\log\sigma}$ admits interior and exterior balls of radius at least $\alpha/(2e\sigma \tilde{F})$ where $\tilde{F} =  \tilde{H} + \frac{\langle\tilde{X}, \tilde{\boldsymbol{\nu}}\rangle}{2}$)
\begin{equation}\label{eqn_noncollasing_constant}
\liminf_{\varepsilon\to 0}I_\varepsilon \geq -\frac{e\sigma }{\alpha}.
\end{equation}

Therefore, using \eqref{eqn_F_init} and \eqref{eqn_F_vareps_1} we know that $\tilde{{N}}_\tau^\varepsilon$ admits interior balls of radius at least
\begin{equation}\label{eqn_rad}
\frac{ \alpha_{\varepsilon}/ (e\sigma)}{2\tilde{F}^\varepsilon(\mathbf{x},\tau)/(2+\log\sigma - \tau)} = \frac{\alpha_{\varepsilon} }{ e\sigma\tilde{F}^{\varepsilon}(\mathbf{x}, \log\sigma)} = \frac{\alpha_\varepsilon \sqrt{\varepsilon^2+|D {v^\varepsilon}(\mathbf{x})|^2}}{e\sigma}
\end{equation}
at $\tilde{X}^{\varepsilon}(\mathbf{x},\tau)$ for all $\mathbf{x}\in \tilde{K}_{\log\sigma}$ and all $\tau\geq \log\sigma$. Moreover, $\liminf_{\varepsilon \to 0} \alpha_{\varepsilon} \geq \alpha$.
Arguing similarly for $\tilde{Z}_\varepsilon^\ast$, this proves the lemma.
\end{proof}

\begin{remark} \label{remark_1}
To see that $\tilde{K}_{1+\log\sigma}$ admits interior and exterior balls of radius at least $\alpha/(2e\sigma \tilde{F})$ where $\tilde{F} =  \tilde{H} + \frac{\langle\tilde{X}, \tilde{\boldsymbol{\nu}}\rangle}{2}$, we note that if $N_t = e^{\tau/2}\tilde{N}_\tau$ admits interior and exterior balls of radius at least $\alpha/F$ at $X(\mathbf{x},t)$, then by the rescaling $\tilde{X}(\mathbf{x},\tau) = t^{-1/2}X(\mathbf{x},t)$ (cf. \eqref{eqn_F_10}) we know that $\tilde{N}_\tau$ admits interior and exterior balls of radius at least $\alpha/(2e^{\tau}\tilde{F})$
at $\tilde{X}(\mathbf{x},\tau)$.
\end{remark}

\begin{remark}\label{remark_2}
From \eqref{eqn_noncollapsing-constant} and \eqref{eqn_rad} we see that the noncollapsing constant of $\tilde{N}^\varepsilon_\tau$, with respect to the radius 
$$
\frac{1}{2\tilde{F}^\varepsilon/(2+\log\sigma - \tau)} =  \sqrt{\varepsilon^2+|D {v^\varepsilon}(\mathbf{x})|^2}\,,
$$
is at least $\alpha/(e\sigma) = \frac{2\alpha/e}{2e^{\log\sigma}}$ for all $\tau\geq \log\sigma$.

Therefore, using \eqref{eqn_F_10}, \eqref{Z_ast} and \eqref{eqn_noncollapsing-constant} we know that
\begin{equation}
I_\varepsilon(t): = \min_{N^\varepsilon_t}\frac{t Z^\varepsilon_\ast}{F^\varepsilon/(2+\log\sigma - \log t)} 
\end{equation}
 is independent of $t\geq \sigma$. Namely, the noncollapsing constant of  $N^\varepsilon_t = e^{\tau/2} \tilde{N}^\varepsilon_\tau$, with 
respect to the radius
$$
\frac{t}{F^\varepsilon/(2+\log\sigma - \log t)} =  \sqrt{t}\sqrt{\varepsilon^2+|D {v^\varepsilon}(\mathbf{x})|^2}\,,
$$
is independent of $t$. 

Now take $t=\sigma$. Since the noncollapsing constant of $\tilde{N}^\varepsilon_{\log\sigma}$  (w.r.t. $ 1/\tilde{F}^{\varepsilon}$) is at least $\alpha/(e\sigma)$, by the same rescaling as in Remark \ref{remark_1} we know that the noncollapsing constant of  $N^\varepsilon_\sigma$ (w.r.t. $ \sqrt{\sigma}\sqrt{\varepsilon^2+|D {v^\varepsilon}(\mathbf{x})|^2} = 2\sigma/F^{\varepsilon}$) is at least $2\alpha/e$, and thus the noncollapsing constant of  $N^\varepsilon_t$ (w.r.t. $\sqrt{t}\sqrt{\varepsilon^2+|D {v^\varepsilon}(\mathbf{x})|^2}$) is at least $2\alpha/e$ for all $t\geq \sigma$ since it is independent of $t$.
\end{remark}

\begin{remark}
Applying Lemma \ref{lem_elliptic_approximator} to $\{\tilde{N}^\varepsilon_\tau\}_{\tau \geq \log\sigma}$ and by Remark \ref{remark_2} we know that the noncollapsing constant of $\{N^\varepsilon_t\}_{t \geq \sigma}$ (w.r.t. $\sqrt{t}\sqrt{\varepsilon^2+|D {v^\varepsilon}(\mathbf{x})|^2}$) is at least $2\alpha/e$ for all $t\geq \sigma$. Since $\{\tilde{N}^\varepsilon_\tau\}_{\tau \geq \log\sigma}$ and $\{N^\varepsilon_t\}_{t \geq \sigma}$ converges locally uniformly to $\{\tilde{K}_\tau \times \R\}_{\tau\geq \log\sigma}$ and $\{K_t \times \R\}_{t\geq\sigma}$, respectively, as $\varepsilon \to 0$, we get that the noncollapsing constant of $K_t$ (w.r.t. $\lim_{\varepsilon\to0}\sqrt{t}\sqrt{\varepsilon^2+|D {v^\varepsilon}(\mathbf{x})|^2} = e^{\tau/2}/\tilde{F} =\frac{1}{F/(2t)} = \frac{1}{H+\langle X, \boldsymbol{\nu}\rangle/(2t) } >0$) is at least $2\alpha/e$ for all $t\geq \sigma$.
\end{remark}

\subsection{Size and structure of the singular set}
In this final section we describe the size and the structure of the singular set for the mean curvature flow with star-shaped initial condition.

\begin{theorem}[Tangent flows]\label{thm_tangent_flow}
Let $\K$ be a mean curvature flow with star-shaped initial condition. Let $(p,t) \in \D \K$ ($t>0$) and let $\lambda_j\to \infty$. 
Then, the flow $\K^j$ obtained from $\K$
by the parabolic rescaling $(p,t)\mapsto (\lambda_j(p-p_j),\lambda_j^{2}(t-t_j))$ converges smoothly and globally:
\begin{align}
\K^j\ra\K^\infty \qquad\qquad\qquad C^\infty_{\textrm{loc}} \,\, \textrm{on}\,\, \R^{n+1}\times(-\infty,0].
\end{align}
The limit $\K^\infty$ is either (i) a static halfspace or (ii) a shrinking round sphere or cylinder.
\end{theorem}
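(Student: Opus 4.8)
I would split into two cases according to the Gaussian density $\Theta=\Theta(\K,(p,t))$ of $\K$ at the base point, which is well defined and finite by Huisken's monotonicity formula (the level set flow of the smooth compact $K_0$ is nonfattening by Soner \cite{Soner}, hence carries a well-defined associated Brakke flow). The hypothesis $t>0$ is used so that the rescaled flows $\K^j$ are defined for all times in $[-\lambda_j^2 t,0]$ with $\lambda_j^2 t\to\infty$, which is what makes the limit live on all of $\R^{n+1}\times(-\infty,0]$. If $\Theta=1$, then by the local regularity theorem for mean curvature flow \cite{white_regularity} the level set flow is a smooth flow with bounded second fundamental form in some parabolic ball $P(p,t,r)$; since $P(0,0,R)$ for $\K^j$ corresponds to $P(p,t,R/\lambda_j)$, we get $\sup_{P(0,0,R)}|\hat A^j|\le\lambda_j^{-1}\sup_{P(p,t,r)}|A|\to 0$ for every fixed $R<\infty$, and, together with the uniform $\alpha$-noncollapsing (Theorem \ref{thm_noncollapsing}) which keeps the domains from degenerating, the $\K^j$ converge in $C^\infty_{\loc}$ to the static flow all of whose time slices equal the fixed halfspace bounded by the limiting tangent plane at the origin. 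This is case (i).

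In the remaining case $\Theta>1$ the point $(p,t)$ is singular. The density ratios of $\K^j$ about the origin at a fixed scale $\rho$ equal those of $\K$ about $(p,t)$ at scale $\rho/\lambda_j\to 0$, hence converge to $\Theta$, so by Brakke/Ilmanen compactness \cite{Ilmanen} a subsequence of $\{\K^j\}$ converges to a weak solution $\K^\infty$ on $\R^{n+1}\times(-\infty,0]$ whose density ratios about the origin are all equal to $\Theta$; therefore $\K^\infty$ is self-similarly shrinking. The core of the argument is to check that the three estimates of Section \ref{sec_mainest}, in the weak form established in Section \ref{sec_weaksol}, pass to $\K^\infty$ and make it a \emph{mean convex, noncollapsed} flow. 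First, $H\ge-C(\beta)$ along $\K$ (Proposition \ref{cor_H_lower_bound}) rescales to $\hat H^j\ge-\lambda_j^{-1}C(\beta)\to 0$, so $H^\infty\ge 0$. Second, on the regular part of $\K^\infty$ (dense by Brakke's regularity theorem), wherever $H^\infty>0$ the corresponding points of $\K^j$ have $H=\lambda_j\hat H^j\to\infty$, so the convexity estimate (Theorem \ref{thm_convexity_est}) gives $\hat\lambda_1^j/\hat H^j=\lambda_1/H\ge-\varepsilon$ there for every $\varepsilon$ and $j$ large, whence $\lambda_1^\infty\ge 0$: convex time slices. Third --- the key scaling observation, and the place where $t>0$ enters a second time --- near the singular point $F/H=\langle X,\boldsymbol{\nu}\rangle/H+2s\to 2t>0$, so the $F$-noncollapsing of $\K$ (Theorem \ref{thm_noncollapsing}, in the weak form of Lemma \ref{lem_elliptic_approximator}) rescales, in the limit, to genuine mean-convex noncollapsing: $\K^\infty$ admits interior and exterior balls of radius at least $\hat\alpha/H^\infty$ for some $\hat\alpha=\hat\alpha(\alpha,\beta,t)>0$. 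A static ($H^\infty\equiv 0$) self-shrinking noncollapsed weak solution must be a hyperplane through the origin (the noncollapsing forbids a cone vertex) and so has density $1$; since $\Theta>1$ this is excluded, so $H^\infty>0$ on the regular part.

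To finish, $\K^\infty$ is a self-similarly shrinking, mean convex, $\hat\alpha$-noncollapsed weak solution with convex time slices, so by the regularity theory for mean convex noncollapsed flows \cite{haslhofer-kleiner_mean_convex} (building on \cite{white_size,white_nature,huisken-sinestrari1,huisken-sinestrari2}) it is smooth, and a smooth self-shrinking mean convex noncollapsed hypersurface is, by Huisken's classification \cite{Huisken_convex}, a hyperplane, a round sphere, or a round cylinder. The hyperplane is ruled out by $\Theta>1$, so $\K^\infty$ is a shrinking round sphere or cylinder --- case (ii) --- and since every subsequential limit has this form, the convergence holds without passing to a further subsequence. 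I expect the main obstacle to be exactly this last step, namely promoting the limiting weak solution to a smooth one and invoking the classification of self-shrinkers, together with the bookkeeping in the previous paragraph converting the $F$-noncollapsing of $\K$ into the mean-convex noncollapsing of the blowup --- precisely where the hypothesis $t>0$ is essential, via $F/H\to 2t$.
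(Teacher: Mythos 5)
Your overall plan is a genuinely different route from the paper's, and it has two real gaps.

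The paper's proof runs a dichotomy on $Q_j\lambda_j^{-1}$ where $Q_j:=\sup_{\K^j\cap P(0,0,1)}H$: if a subsequence has $Q_j\lambda_j^{-1}\to 0$, the local curvature estimate (Theorem \ref{thm-local_curvature_est}) gives convergence to a static halfspace; otherwise, arguing exactly as in the blowup theorem (Theorem \ref{thm_blow_up_I}) shows $\K^j$ is mean convex $\hat\alpha$-noncollapsed in $P(0,0,\eta_j)$ with $\eta_j\to\infty$, and one applies the structure theorem \cite[Thm.~1.14]{haslhofer-kleiner_mean_convex} directly to the sequence of flows. Uniqueness of the tangent flow is then Colding--Minicozzi \cite{CM_unique}. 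You instead split on the Gaussian density, pass to a weak limit via Brakke/Ilmanen compactness, and try to promote the limit to a smooth mean convex noncollapsed self-shrinker. That route can be made to work, but as written it is not.

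First gap: in the case $\Theta>1$ your conversion from $F$-noncollapsing to $H$-noncollapsing relies on ``$F/H\to 2t$ near the singular point,'' which presupposes a uniform lower bound on $H$ over increasingly large parabolic balls of the rescaled flow, not merely near the base point. This is exactly what the paper's blowup proof establishes by contradiction using the local curvature estimate (Remark \ref{rem-forward_extension}): if $\hat H$ were too small somewhere in $P(0,0,\eta_j)$, the local curvature estimate centered at that point would force $\hat H(0,0)<1$. Without that lower bound your argument that $2tH$ dominates $\langle X,\boldsymbol\nu\rangle$ in the relevant region is an assertion, not a proof. Second gap: your concluding remark that ``since every subsequential limit has this form, the convergence holds without passing to a further subsequence'' is a non sequitur; subsequential limits could a priori be shrinking cylinders with different axes, so you need the uniqueness-of-tangent-flows theorem of Colding--Minicozzi \cite{CM_unique}, which the paper explicitly invokes. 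Two smaller issues: the citation for the classification of smooth mean convex self-shrinkers should not be \cite{Huisken_convex} (the 1984 convex-surfaces paper); and your promotion of the weak limit to smoothness leans on ``the regularity theory for mean convex noncollapsed flows,'' but mean convex noncollapsed flows are not smooth in general --- the smoothness is specific to shrinking solitons, i.e.\ it is the content of \cite[Thm.~1.14]{haslhofer-kleiner_mean_convex}, which the paper uses as a single package precisely to avoid this detour.
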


\begin{proof}
Let $Q_j:=\sup_{\K^j\cap P(0,0,1)} H$.
If there is a subsequence such that $Q_j\lambda_j^{-1}\to 0$, then by the local curvature estimate (Theorem \ref{thm-local_curvature_est}) we have convergence to a static halfspace.
Assume now $\liminf_{j\to\infty} Q_j\lambda_j^{-1}>0$.
Then, arguing as in the proof of the blowup theorem (Theorem \ref{thm_blow_up_I}) we see that $\K^j$ is \emph{mean convex} $\hat{\alpha}$-noncollapsed in $P(0,0,\eta_j)$ for some sequence $\eta_j\to \infty$.
Applying the structure theorem \cite[Thm. 1.14]{haslhofer-kleiner_mean_convex} we conclude that a subsequence converges to a round shrinking sphere or cylinder.
Finally, by a recent result of Colding-Minicozzi \cite{CM_unique} the limit is unique, i.e. we have convergence even without passing to a subsequence.
\end{proof}

\begin{theorem}[Partial regularity]\label{thm_partial_regularity}
Suppose $\mathcal{K}$ is a mean curvature flow with star-shaped initial condition. Then the parabolic Hausdorff dimension and Minkowski dimension of the singular set $\mathcal{S}\subset \R^{n+1,1}$ are at most $n-1$. Moreover, $\mathcal{H}_{\textrm{par}}^{n-1}(\mathcal{S})<\infty$.
\end{theorem}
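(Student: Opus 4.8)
The plan is to deduce this directly from the three main estimates (now valid for the level set flow in the viscosity sense by the previous subsection) together with the general stratification machinery of White, exactly as in \cite[Sec. 6]{haslhofer-kleiner_mean_convex}. First I would recall the structure: by Theorem \ref{thm_tangent_flow} every tangent flow to $\mathcal K$ at a point $(p,t)$ with $t>0$ is either a static halfspace (equivalently a static plane with multiplicity one, using the one-sided minimization / local regularity as in the proof of Theorem \ref{thm-local_curvature_est}), a shrinking round sphere, or a shrinking round cylinder $\mathbf S^k\times\R^{n-k}$ with $1\le k\le n$. In particular every tangent flow is \emph{smooth} and \emph{self-similar}, and its spatial symmetry group (the largest Euclidean subspace of translations leaving it invariant) has dimension at most $n-1$: the halfspace case is the regular set, the sphere has no translational symmetries, and the cylinder $\mathbf S^k\times\R^{n-k}$ has exactly $n-k\le n-1$ translational symmetries.

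Next I would invoke White's stratification theorem for Brakke flows (see \cite{white_size}, and the self-contained treatment in \cite[Sec. 6]{haslhofer-kleiner_mean_convex}), which stratifies the singular set $\mathcal S$ by
$$
\mathcal S=\mathcal S^0\subseteq \mathcal S^1\subseteq\cdots\subseteq \mathcal S^{n-1},
$$
where $\mathcal S^j$ consists of points at which no tangent flow is invariant under a $(j+1)$-dimensional family of spatial translations, and concludes that $\dim_{\mathrm{par}}\mathcal S^j\le j$ (parabolic Hausdorff dimension). Since, by the previous paragraph, \emph{every} tangent flow in the star-shaped case has translational symmetry group of dimension $\le n-1$, we have $\mathcal S=\mathcal S^{n-1}$, whence $\dim_{\mathrm{par},\mathcal H}\mathcal S\le n-1$. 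The Minkowski-dimension bound is obtained the same way using the quantitative/Minkowski version of the stratification (again as in \cite[Sec. 6]{haslhofer-kleiner_mean_convex} or White), which upgrades the Hausdorff bound to a Minkowski bound once one has uniform local curvature estimates away from the top stratum — and these are exactly Theorem \ref{thm-local_curvature_est} and Theorem \ref{thm_convexity_est}, now available for the weak solution. Finally, finiteness of $\mathcal H^{n-1}_{\mathrm{par}}(\mathcal S)$ is \emph{not} proved by stratification: here I would quote the recent result of Colding–Minicozzi \cite{CM_singular_set}, which applies precisely because (by Theorem \ref{thm_tangent_flow}) all singularities are cylindrical, hence generic in the sense of \cite{CM_generic}; their theorem then gives that the top stratum $\mathcal S^{n-1}$ is $(n-1)$-rectifiable with locally finite $\mathcal H^{n-1}_{\mathrm{par}}$ measure, and the lower strata have dimension $\le n-2$.

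The main obstacle is really a bookkeeping one: one must check that White's stratification, which is usually stated for integral Brakke flows or for the mean convex level set flow, applies verbatim to the star-shaped level set flow. The two ingredients needed are (a) a monotonicity formula / Gaussian density that is upper semicontinuous and whose vanishing of the density drop characterizes translational symmetry of the tangent flow — this is Huisken's monotonicity, which holds for any Brakke flow and in particular for the level set flow here (nonfattening by Soner \cite{Soner}, so the level set flow carries a Brakke flow structure); and (b) the local regularity theorem to identify multiplicity-one planar tangent flows with regular points, which is \cite{white_regularity}, \cite{Wang_regularity}, already cited. Given (a) and (b), together with Theorem \ref{thm_tangent_flow} controlling the list of tangent flows, the stratification bounds follow mechanically, and the Colding–Minicozzi finiteness result plugs in directly. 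I would therefore organize the proof as: (1) reduce to the classification of tangent flows via Theorem \ref{thm_tangent_flow}; (2) note every tangent flow has $\le n-1$ translational symmetries; (3) apply White's Hausdorff and Minkowski stratification to get $\dim\le n-1$; (4) apply \cite{CM_singular_set} for finiteness of $\mathcal H^{n-1}_{\mathrm{par}}$.
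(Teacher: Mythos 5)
Your proof is correct and takes essentially the same route as the paper: reduce everything to the classification of tangent flows (Theorem~\ref{thm_tangent_flow}), then apply standard dimension reduction / stratification machinery for the Hausdorff and Minkowski bounds and Colding--Minicozzi \cite{CM_singular_set} for the finiteness of $\mathcal H^{n-1}_{\mathrm{par}}(\mathcal S)$. The only differences are in packaging: the paper phrases the Hausdorff bound as a direct Federer-style dimension reduction (blow up at a density point to contradict the tangent flow classification) rather than invoking White's stratification by name, and it cites Cheeger--Haslhofer--Naber \cite{CHN} explicitly as the source of the Minkowski-dimension upgrade, which is the precise reference behind the ``quantitative stratification'' you allude to via \cite{haslhofer-kleiner_mean_convex}.
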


\begin{proof}
The estimate for the parabolic Hausdorff dimension is a quick consequence of the tangent flow theorem (Theorem \ref{thm_tangent_flow}).
Namely, if the parabolic Hausdorff dimension of $\mathcal{S}$ where bigger than $n-1$,
then blowing up at a density point we would obtain a tangent flow whose singular set has parabolic Hausdorff dimension bigger than $n-1$, contradicting the classification of tangent flows.
The stronger estimate for the parabolic Minkowski dimension and the finiteness of $\mathcal{H}_{\textrm{par}}^{n-1}(\mathcal{S})$
can be obtained by combining Theorem \ref{thm_tangent_flow} with the work of Cheeger-Haslhofer-Naber \cite{CHN} and Colding-Minicozzi \cite{CM_singular_set}, respectively.
\end{proof}

\bibliographystyle{alpha}
\bibliography{Starshaped-MCF}

\end{document}